\newtheoremstyle{plain2}{\topsep}{\topsep}%
     {\itshape}%         Body font
     {}%         Indent amount (empty = no indent, \parindent = para indent)
     {\bfseries}% Thm head font
     {.}%        Punctuation after thm head
     {.5em}%     Space after thm head (\newline = linebreak)
     {\thmnumber{(#2)}\thmname{ #1}\thmnote{ #3}}%         Thm head spec
\theoremstyle{plain2}
\newtheorem{teo}{Theorem}[section]
\newtheorem{prop}[teo]{Proposition}
\newtheorem{coro}[teo]{Corollary}
\newtheorem{lemma}[teo]{Lemma}
\newtheoremstyle{definition2}{\topsep}{\topsep}%
     {}%         Body font
     {}%         Indent amount (empty = no indent, \parindent = para indent)
     {\bfseries}% Thm head font
     {.}%        Punctuation after thm head
     {.5em}%     Space after thm head (\newline = linebreak)
     {\thmnumber{(#2)}\thmname{ #1}\thmnote{ #3}}%         Thm head spec
\theoremstyle{definition2}
\newtheorem{rem}[teo]{Remark}
\def\R{\mathbb{R}}
\def\Vb{\mathbb{V}}
\def\eps{\varepsilon}
\def\vphi{\varphi}
\def\spann{\mathrm{span}}
\def\proj{\mathrm{proj}}
\def\Neh{\mathcal{N}}
\title{A remark on natural constraints in variational methods and an application to superlinear
Schr\"odinger systems.\footnote{Work
partially supported by the PRIN2009 grant ``Critical Point Theory and
Perturbative Methods for Nonlinear Differential Equations''. \break 
2010 \emph{AMS Subject Classification.} Primary 58E05, 35A15; secondary 35J50.
}}
\author{Benedetta Noris and Gianmaria Verzini}
\date{December 14, 2011.}
\begin{document}

\maketitle

\begin{abstract}
For a $C^2$-functional $J$ defined on a Hilbert space $X$, we consider the set
\(
\Neh=\{ x\in A:\, \proj_{V_x}\nabla J(x)=0\},
\)
where $A\subset X$ is open and $V_x\subset X$ is a closed linear subspace, possibly
depending on $x\in A$. We study sufficient conditions for a constrained critical point
of $J$ restricted to $\Neh$ to be a free critical point of $J$, providing a unified approach to
different natural constraints known in the literature, such as the Birkhoff-Hestenes
natural isoperimetric conditions and the Nehari manifold. As an application,
we prove multiplicity of solutions to a class of superlinear Schr\"odinger systems on singularly
perturbed domains.
\end{abstract}

%\subjclass[2000]{Primary }

\section{Introduction}

Let $X$ denote a Hilbert space and let $J$ be a functional of class $C^2$ on $X$.
A \emph{natural constraint} for $J$ is a manifold $\Neh\subset X$ enjoying the property
that every critical point of $J$ constrained to $\Neh$ is in fact a free critical point.
When searching for critical points of $J$, natural constraints are typically used when $J$
does not admit (nontrivial) minima.

According to Birkhoff and Hestenes \cite{birkhoff1}, the first example in the literature
appears in a paper by Poincar\'e \cite{poincare05} in the study of closed geodesics
on a closed convex analytic surface. Since such a geodesic can not be of minimal length,
Poincar\'e finds it by minimizing the length functional among paths which satisfy the
``natural isoperimetric condition'' of dividing the surface into two parts of equal integral curvature.
The aforementioned paper by Birkhoff and Hestenes is the first which considers natural constraints
from an abstract point of view. In particular, for the fixed end-point problem in the calculus of
variations, the authors prove that every extremal of the functional is indeed a local minimum when a suitable
set of natural conditions is added. In modern language, these conditions can be written as
\[
\langle \nabla J(x), \xi_i \rangle = 0,
\]
where the variations $\xi_i$'s are related to the second variation of $J$, and their number is the
Morse index of the extremal.

In more recent times, apart from natural constraints induced by symmetry \cite{palais79}, the development of this topic followed mainly two directions.
On one hand, the ideas of Birkhoff and Hestenes were exploited by Berger \cite{berger71}
in searching for periodic orbits to Hamiltonian systems, and
by Berger and Schechter \cite{berger77} from a more abstract point of view. In these papers
$\nabla J$ has a semilinear structure, while the natural constraint is
\[
\Neh_{\text{B-S}} = \left\{x\in X:\, \langle \nabla J(x),v\rangle = 0\text{ for every }v\in V \right\}
\]
where $V\subset X$ is a closed linear subspace such that $J''(x)$ is definite on $V$ for every
$x\in\Neh_{\text{B-S}}$. This implies both that
$\Neh_{\text{B-S}}$ is a manifold and that it is a natural constraint, because for any constrained critical
point the corresponding Lagrange multiplier is zero.
On the other hand, one of the most famous examples
of natural constraint is the so called \emph{Nehari manifold}
\[
\Neh_{\text{N}} = \left\{x\in X:\, x\neq 0\text{ and }G(x):=\langle \nabla J(x),x\rangle = 0 \right\},
\]
which is named after the papers by Zeev Nehari \cite{nehari60,nehari61,nehari75}.
Again, since
\begin{equation}\label{eq:intro}
\langle \nabla G(x), x\rangle = \langle \nabla J(x), x\rangle + J''(x)[x,x],
\end{equation}
if $J''(x)$ along $x$ is non-degenerate for every $x\in\Neh$, then
both $\Neh$ is a manifold and it is a natural constraint
(see for instance \cite{ambrosetti}, Proposition 1.4).

As we mentioned before, a natural constraint $\Neh$ is particularly useful when searching for
non-minimal critical points, which are minima of the restricted functional, so that one expects
to find critical points of $J$ by minimizing $J|_{\Neh}$.
From this point of view, the two types of natural constraints introduced above behave in a quite different way.
While $\Neh_{\text{B-S}}$ is often weakly closed, so that
the direct method of the calculus of variations usually applies, on the other hand $\Neh_{\text{N}}$ needs not be,
thus exhibiting a lack of compactness.
The typical strategy to overcome this difficulty is to provide a sort of ``projection'' of $X\setminus\{0\}$ into $\Neh_{\text{N}}$, such as $u\mapsto \bar t (u)u$, where $\bar t(u)\in\R$ is conveniently chosen by studying the critical points of the function $t\mapsto J(tu)$.
In this direction, the main problems arise when a globally defined projection is not available.
An alternative way to proceed is to show that $J|_{\Neh_{\text{N}}}$ satisfies the Palais-Smale condition. To this aim
it is sufficient to require that the non-degeneracy of $J''$ holds uniformly on $\Neh_{\text{N}}$ in the sense that
\begin{equation}\label{eq:intro2}
\text{either } J''(x)[x,x]\geq\delta \|x\|^2 \text{ or } J''(x)[x,x]\leq-\delta \|x\|^2,
\end{equation}
for some $\delta>0$, for every $x\in \Neh_{\text{N}}$. Indeed, under such assumption, one can prove that constrained Palais-Smale sequences are
free ones. This allows to recover compactness by assuming the usual Palais-Smale condition on $J$ (see for instance \cite{dancerWeiWeth2010}).

In the literature it is possible to find a number of generalizations of the above ideas: when searching for points such that $\nabla J(x)=0$, one imposes as a preliminary condition the vanishing of the projection of $\nabla J(x)$ on some closed subspace $V_x\subset X$, possibly dependent on $x$. This is the case of the classical Nehari manifold, since
\[
\langle \nabla J(x),x\rangle=0 \quad\iff\quad \proj_{\spann\{x\}} \nabla J(x)=0.
\]
Among others, we wish to mention
\cite{vangroesen88,ramosPistoia04,ramosYang05,pankov07,ramosTavares2008,
giraoGomes09,szulkinWeth2009,szulkinWeth2010}.

The main aim of the present paper is to provide conditions in order to extend the above scheme to the constraint
\[
\Neh=\{ x\in A:\, G(x):=\proj_{V_x}\nabla J(x)=0\},
\]
where $A\subset X$ is open and $V_x\subset X$ is a closed linear subspace, for every $x\in A$. Referring to \eqref{eq:intro} and \eqref{eq:intro2}, the main feature we want to preserve is that the differential of $G$ restricted to $V_x$ consists of two terms, one of which vanishes on $\Neh$, and the other one is a quadratic form related to $J''$, enjoying some coercivity property.
It will come out that, apart from some regularity conditions, we will need two main properties, namely that
\begin{itemize}
 \item $V_x$ is \emph{invariant under differentiation}, in the sense that the differential
 of any regular vector field laying in $V_x$ for every $x$ maps $V_x$ into itself;
 \item $V_x$ splits into two subspaces $V^\pm_x$, with the property that $J''(x)$ is \emph{coercive/
 anticoercive} on $V^\pm_x$ respectively.
\end{itemize}
We stress the fact that, with respect to the previous literature, we do not require $J''$ to be definite on $V_x$; this allows a better localization of the critical points, as we show in our application to nonlinear Schr\"odinger systems.
To express the dependence of $V_x$ on $x$, it is useful to introduce a vector bundle
structure on $V$, the disjoint union of $V_x$. To do that, we denote by $TA$ the (trivial)
tangent bundle of $A$. In the following we are interested only in trivial $C^1$-subbundles of $TA$,
that is bundles $V \to A$, with $V \subset TA$, equipped with a global $C^1$-trivialization
\[
\tau: V\to A\times \mathbb{V},
\]
for some Hilbert space $\mathbb{V}$. With this notation, $V_x$ is the fiber of $V$ at $x$
which is isomorphic to $\Vb$ via $\tau_x:=\tau(x,\cdot)$.
We observe that, by means of the natural immersion which we will systematically omit,
$\tau^{-1}$ can be naturally interpreted as a $C^1$ map
\[
\tau^{-1}: A\times \mathbb{V} \to  A \times X.
\]
With this notation, the regularity assumptions we mentioned above concern $\partial_x \tau^{-1}$,
besides $J'$ and $J''$.
Our main result is the following.
\begin{teo}\label{teo:main_intro}
Let $X$ be a Hilbert space and let $J\in C^2(X,\R)$. For $A\subset X$ open, let
$V^\pm$ be two trivial $C^1$-subbundles of $TA$,
with fibers $\Vb^\pm$ and trivializations $\tau^\pm$ respectively, which we assume
to induce isometries $\tau_x^\pm$ on every fiber.
Suppose that $V_x^+\cap V_x^-=\{0\}$ and that $V:=V^++V^-$ is such that $V_x$ is a
proper subspace of $T_xA$.
Set
\[
\Neh :=\left\{x\in A:\,\proj_{V_x}\nabla J(x)=0 \right\}
\]
and assume that there exists $\delta>0$ such that, for every $x\in\Neh$, it holds
\begin{itemize}
 \item[(inv)] $\xi'(x)[v]\in V_x$ for every $(\cdot,\xi(\cdot))$ $C^1$-section of $V$, $v\in V_x$;
 \item[(coe)]  $\pm J''(x)[v,v] \geq \delta \|v\|_X^2$ for every $v\in V^\pm_x$.
\end{itemize}
Furthermore, assume that $J'(x)\in X^*$, $J''(x):X\times X \to \R$, and $\partial_x
(\tau^{\pm}_x)^{-1}:\Vb^\pm\times X\to X$ are bounded as linear/bilinear maps, uniformly
for $x$ in $\Neh$.

Then $\Neh$ is a natural constraint for $J$, and every constrained Palais-Smale sequence
for $J$ is indeed a free one.
\end{teo}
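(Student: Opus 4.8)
The plan is to derive both conclusions from a single pointwise estimate: there is a constant $C$, uniform on $\Neh$, with
\[
\|\nabla J(x)\|_X \le C\,\|\nabla_\Neh J(x)\|_X \qquad\text{for all } x\in\Neh,
\]
where $\nabla_\Neh J(x)$ is the gradient of $J|_\Neh$, i.e.\ the orthogonal projection of $\nabla J(x)$ onto $T_x\Neh$. Choosing $x$ with $\nabla_\Neh J(x)=0$ then yields the natural constraint property, while applying the estimate along a constrained Palais--Smale sequence shows it is a free one. To begin, I would realize $\Neh$ as a regular zero set: since $\proj_{V_x}\nabla J(x)=0$ is equivalent to $\nabla J(x)\perp V_x^+$ and $\nabla J(x)\perp V_x^-$, I define $g=(g^+,g^-)\colon A\to\Vb$, $\Vb:=\Vb^+\oplus\Vb^-$, by $\langle g^\pm(x),\eta\rangle_{\Vb^\pm}=J'(x)[(\tau_x^\pm)^{-1}\eta]$, so that $\Neh=g^{-1}(0)$ and, for $x\in\Neh$, $\nabla J(x)\perp V_x$. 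The map $g$ is $C^1$ because $J\in C^2$ and $x\mapsto(\tau_x^\pm)^{-1}$ is $C^1$ with the assumed uniform bounds.

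The first substantial step is to compute $g'(x)$ along $V_x$ for $x\in\Neh$. Differentiating the defining relation gives, for $v\in V_x$ and $w^\pm\in V_x^\pm$,
\[
\langle g^{\pm\prime}(x)[v],\tau_x^\pm w^\pm\rangle_{\Vb^\pm} = J''(x)[v,w^\pm] + \langle\nabla J(x),(\partial_v(\tau_x^\pm)^{-1})(\tau_x^\pm w^\pm)\rangle_X.
\]
This is where (inv) enters: the last vector equals $\xi'(x)[v]$ for the $C^1$-section $\xi(y)=(\tau_y^\pm)^{-1}(\tau_x^\pm w^\pm)$ of $V^\pm\subset V$, hence lies in $V_x$, and since $\nabla J(x)\perp V_x$ on $\Neh$ the correction term drops out. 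Summing the two identities, for every $v\in V_x$ and $w=w^++w^-\in V_x$,
\[
\langle g'(x)[v],(\tau_x^+w^+,\tau_x^-w^-)\rangle_\Vb = J''(x)[v,w].
\]

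The crux is to turn this into uniform invertibility of $g'(x)|_{V_x}$ using (coe). Writing $v=v^++v^-$ and testing the last identity with $(\tau_x^+v^+,-\tau_x^-v^-)$, the symmetry of $J''$ cancels the mixed terms:
\[
\langle g'(x)[v],(\tau_x^+v^+,-\tau_x^-v^-)\rangle_\Vb = J''(x)[v^+,v^+]-J''(x)[v^-,v^-]\ge\delta\big(\|v^+\|_X^2+\|v^-\|_X^2\big).
\]
Since $\tau_x^\pm$ are isometries the test vector has norm $(\|v^+\|_X^2+\|v^-\|_X^2)^{1/2}$, whence $\|g'(x)[v]\|_\Vb\ge(\delta/\sqrt2)\,\|v\|_X$; so $g'(x)|_{V_x}$ is injective with closed range, uniformly. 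The same cancellation applied to the self-adjoint operator $L_x\colon V_x\to V_x$, $\langle L_x v,w\rangle_X=J''(x)[v,w]$, gives $\|L_x v\|_X\ge(\delta/2)\|v\|_X$, so $L_x$ is invertible; a one-line duality argument then shows $g'(x)|_{V_x}$ is onto, hence an isomorphism. I emphasize that only the separate (anti)coercivity of $J''$ on $V_x^\pm$ is used, not definiteness on all of $V_x$. It follows that $0$ is a regular value, $\Neh$ is a $C^1$-manifold with $T_x\Neh=\ker g'(x)$, and $X=V_x\oplus T_x\Neh$ with bounded oblique projection $\Pi_x=(g'(x)|_{V_x})^{-1}g'(x)$ onto $V_x$; the uniform bounds on $J'$, $J''$, $\partial_x(\tau_x^\pm)^{-1}$ give $\sup_{x\in\Neh}\|\Pi_x\|<\infty$.

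Finally I would prove the target estimate. For $x\in\Neh$ write $\|\nabla J(x)\|_X^2=\langle\nabla J(x),\nabla J(x)\rangle_X$ and split the second factor as $\Pi_x\nabla J(x)+(I-\Pi_x)\nabla J(x)$. The first part lies in $V_x$ and is orthogonal to $\nabla J(x)$ (as $x\in\Neh$), so it contributes nothing; the second lies in $T_x\Neh$, so its pairing with $\nabla J(x)$ equals its pairing with $\nabla_\Neh J(x)$. Cauchy--Schwarz together with $\|I-\Pi_x\|\le1+\|\Pi_x\|$ then gives $\|\nabla J(x)\|_X\le(1+\|\Pi_x\|)\|\nabla_\Neh J(x)\|_X$, which is the desired inequality with $C=1+\sup_{x\in\Neh}\|\Pi_x\|$. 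The two delicate points are the use of (inv) to discard the connection term in the second step (one must produce an actual $C^1$-section attaining a prescribed value, supplied by the trivialization) and, above all, the uniform lower bound of the third step, where the symmetry-driven cancellation of the cross terms is what lets one dispense with global definiteness of $J''$.
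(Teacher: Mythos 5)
Your proposal is correct, and its two key computations are exactly the ones the paper uses: you realize $\Neh$ as the zero set of the coordinate map $g=(g^+,g^-)$ (the paper's $G$, up to a sign on the second component), you kill the connection term $\langle\nabla J(x),\xi'(x)[v]\rangle$ by combining (inv) with $\nabla J(x)\perp V_x$ on $\Neh$ (the paper's Lemma \ref{lemma:main_lemma}), and you exploit the symmetry of $J''$ through the sign-flipped test vector $(\tau_x^+v^+,-\tau_x^-v^-)$ to cancel the mixed terms and get the uniform lower bound $\delta(\|v^+\|_X^2+\|v^-\|_X^2)$, which is precisely how the paper verifies \eqref{eq:unif_inject} and the coercivity needed for surjectivity. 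Where you genuinely diverge is the endgame. The paper routes everything through the abstract Propositions \ref{prop:abstract_nehari_natural_constraint} and \ref{prop:abstract_nehari_palais_smale}: surjectivity of $G'(x)|_{V_x}$ (via Lax--Milgram on the nonsymmetric coercive form $a(w,v)$) forces any Lagrange multiplier to vanish, and the uniform injectivity/boundedness of $G'$ forces the multipliers $\lambda_n$ of a constrained PS sequence to tend to zero in $Y^*$. You never introduce multipliers: you get surjectivity from invertibility of the self-adjoint operator $L_x$ (norm-bounded below plus self-adjointness, an equivalent substitute for Lax--Milgram), build the uniformly bounded oblique projection $\Pi_x=(g'(x)|_{V_x})^{-1}g'(x)$ realizing the splitting $X=V_x\oplus T_x\Neh$, and distill everything into the single pointwise inequality $\|\nabla J(x)\|_X\le(1+\sup_x\|\Pi_x\|)\,\|\nabla_\Neh J(x)\|_X$, from which both conclusions drop out at once. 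Each packaging has its merits: the paper's multiplier formulation plugs directly into Ekeland's principle on $\overline{\Neh}$ (Corollary \ref{coro:abst_neh}) and is stated for arbitrary $C^1$ constraint maps $G$, not only projected gradients, so it is reusable in that generality; your inequality is more geometric and self-contained, handles the tangential-gradient notion of constrained PS sequence directly (and implies the multiplier notion, since restricting $J'(x_n)-\lambda_n\circ G'(x_n)\to0$ to $T_{x_n}\Neh$ gives $\nabla_\Neh J(x_n)\to0$), and makes the comparison constant explicit. One point you share with the paper and should spell out if you write this up: the fiberwise boundedness of the map $v=v^++v^-\mapsto(\tau_x^+v^+,\tau_x^-v^-)$, needed to represent the right-hand side functional in your duality step, holds pointwise by the closed graph theorem (the fibers $V_x$, $V_x^\pm$ being closed), and crucially only the uniform lower bound $\|\tau_x v\|_Y^2\ge\tfrac12\|v\|_X^2$ enters the uniform estimates.
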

We stress the fact that the above theorem can be exploited in order to obtain the existence
of critical points for $J$, without the need of defining any global projection of $A$ onto $\Neh$.

To better clarify the assumptions above, one can consider the particular case (which includes most applications) in which $V_x$ is constant, except for a finite dimensional subspace. That is,
let us consider a fixed closed subspace $W\subset X$, and $\xi_i\in C^1(A,X)$, $i=1,\dots,k$, an orthonormal set, with
$W \cap \spann\left\{\xi_1(x),\dots,\xi_k(x)\right\}=\{0\}$ and let us set
\[
V_x = W \oplus \spann\left\{\xi_1(x),\dots,\xi_k(x)\right\}.
\]
In such a situation, $V_x$ induces a $C^1$-subbundle of $TA$ with fiber $\mathbb{V}\cong W\times \R^k$
and trivialization
\[
\tau_x v = (\proj_{W} v, \langle v, \xi_1(x)\rangle, \dots , \langle v, \xi_k(x)\rangle).
\]
As a consequence, $\tau_x$ is trivially an isometry and
\[
\partial_x (\tau_x)^{-1}: ((w,t_1,\dots, t_k),u) \mapsto \sum_{i=1}^k t_i \xi'_i(x)[u]
\]
is uniformly bounded as a bilinear map on $\Vb\times X$ whenever the linear operators
$\xi'_i(x):X\to X$ are. Finally, assumption (inv) can be more explicitly written as
\[
\xi'_i(x)[v]\in V_x \quad\text{ for every }v\in V_x,\ 1\leq i\leq k.
\]

One of the main advantages of the method of natural constraints with respect to other
variational methods, such as
mountain pass or linking theorems, is that it allows to better localize the critical points, thus
providing a deeper qualitative description. This is particularly advantageous when facing multiplicity
issues. To illustrate this point, in the second part of the paper we apply the above result in order to prove multiplicity of solutions to a class of elliptic systems of gradient type with superlinear
nonlinearities, in singularly perturbed domains. Despite the fact that we can deal with
more general situations, in this introduction we describe our results in the case of cubic
nonlinearities in a smooth bounded domain of $\R^N$, with $N=2,3$.
Such type of nonlinearities have been extensively studied in the recent years, due to
their applications both to nonlinear optics and to Bose-Einstein condensation. Let us consider the system
\begin{equation}\label{eq:bec}
-\Delta u_i=\mu_i u_i^{3} + u_i\sum_{j\neq i}\beta_{ij}u_j^2, \qquad u_i>0,\ u_i\in H^1_0(\Omega),
\qquad i=1,\ldots,k,
\end{equation}
where $\mu_i>0$, $\beta_{ij}=\beta_{ji}\in\R$, for every $i,j$. At least in some particular cases, system \eqref{eq:bec} is well known to admit positive solutions with minimal energy, see for instance \cite{ctv2002poincare,ctv2003JFA,dancerWeiWeth2010}.
We aim at extending to \eqref{eq:bec} the results first obtained by Dancer in the case of a single equation,
concerning the effect of the domain shape on the  multiplicity
of solutions, see \cite{dancer88,dancer90}.
While in these papers the tools are mainly topological, a variational approach to the single
equation case has been introduced by Beyon in \cite{byeon2001}. We prove the following.
\begin{teo}\label{teo:intro_appl}
Let $\Omega$ and $\Omega_l$, $l=1,\dots,n$, be bounded regular domains such that
\[
\overline{\Omega}_l\cap \overline{\Omega}_m =\emptyset\text{ for every }
l\neq m,\qquad \Omega \setminus \overline{D} = \bigcup_{l=1}^n \Omega_l,
\]
where $D$ is a bounded regular open set which is sufficiently small in a suitable sense.
If $\beta_{ij}\leq \bar \beta$ for every $i\neq j$, with $\bar\beta>0$ sufficiently small,
then system \eqref{eq:bec} admits at least $(2^n-1)^k$ positive solutions.
\end{teo}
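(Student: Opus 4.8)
The plan is to construct, for each admissible concentration pattern, a localized Nehari-type constraint to which Theorem~\ref{teo:main_intro} applies, and then to produce one solution per pattern by constrained minimization of a suitably penalized energy. I work on $X=(\spc)^k$ with the energy
\[
J(u)=\frac12\sum_{i=1}^k\int_\Omega|\nabla u_i|^2\dx-\frac14\int_\Omega\Big(\sum_{i=1}^k\mu_i u_i^4+2\sum_{i<j}\beta_{ij}u_i^2u_j^2\Big)\dx,
\]
whose critical points solve \eqref{eq:bec}; to force nonnegativity I replace each $u_i$ by its positive part in the nonlinear term, recovering strict positivity a posteriori via the strong maximum principle. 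I index the solutions by the configurations $\sigma=(S_1,\dots,S_k)$ with $\emptyset\neq S_i\subseteq\{1,\dots,n\}$: there are exactly $(2^n-1)^k$ of them, and $S_i$ is the set of pieces $\Omega_l$ on which $u_i$ will concentrate. Denoting by $P_l:\spc\to H^1_0(\Omega_l)$ the orthogonal projection (extending by zero), I set $\eta_{i,l}(x):=(0,\dots,P_lx_i,\dots,0)$ (nonzero in the $i$-th slot) and, on the open set $A=\{x: P_lx_i\neq0\text{ for }l\in S_i\}$,
\[
V_x^\sigma:=\spann\{\eta_{i,l}(x):1\le i\le k,\ l\in S_i\},\qquad \Neh_\sigma:=\{x\in A:\ \proj_{V_x^\sigma}\nabla J(x)=0\}.
\]
Here $\langle\nabla J(x),\eta_{i,l}(x)\rangle=0$ is precisely the Nehari identity for the $i$-th component localized on $\Omega_l$. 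Since along each $\eta_{i,l}$ the map $t\mapsto J(x+t\eta_{i,l})$ has a strict maximum at the constraint, I take $V^-:=V^\sigma$ and $V^+:=\{0\}$.

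Next I would check the hypotheses of Theorem~\ref{teo:main_intro}. Because $\Omega_l\cap\Omega_m=\emptyset$, the ranges of the $P_l$ are mutually orthogonal, so $P_lP_m=\delta_{lm}P_l$; since $\eta_{i,l}'(x)[v]=(0,\dots,P_lv_i,\dots,0)$, writing $v=\sum c_{j,m}\eta_{j,m}(x)\in V_x^\sigma$ gives $P_lv_i=c_{i,l}P_lx_i$, hence $\eta_{i,l}'(x)[v]=c_{i,l}\eta_{i,l}(x)\in V_x^\sigma$, which is exactly (inv); the same orthogonality makes the normalized $\eta_{i,l}(x)$ an orthonormal frame, so the induced trivialization is an isometry with $\partial_x(\tau_x)^{-1}$ uniformly bounded (the $P_l$ being bounded). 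The crucial point is (coe): on $V_x^\sigma$ one has $J''(x)[\eta_{i,l},\eta_{i,l}]=\|P_lx_i\|^2-3\mu_i\int_\Omega x_i^2(P_lx_i)^2-\sum_{j\neq i}\beta_{ij}\int_\Omega x_j^2(P_lx_i)^2$, and on $\Neh_\sigma$ the superlinear (cubic) homogeneity forces this to be $\le-\delta\|\eta_{i,l}\|^2$; the off-diagonal terms across distinct pieces vanish by disjoint supports, while those across distinct components are controlled by $\bar\beta$, so for $\bar\beta$ small $-J''(x)$ is uniformly coercive on all of $V_x^\sigma$. Uniform bounds on $J',J''$ over $\Neh_\sigma$ follow from the uniform norm bounds imposed by the localized Nehari identities. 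Theorem~\ref{teo:main_intro} then yields that $\Neh_\sigma$ is a natural constraint and that constrained Palais--Smale sequences are free ones.

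To run the minimization I must first cope with the lack of lower boundedness coming from the pieces $\Omega_l$ with $l\notin S_i$, along which $J$ still admits superlinear descent: following a penalization scheme in the spirit of \cite{byeon2001}, I cap the nonlinearity of the $i$-th component on such pieces, obtaining $J_\sigma$ which agrees with $J$ together with its second differential along $V_x^\sigma$ (so the verification above is unaffected, $V_x^\sigma$ only involving constrained pieces) but is bounded below and coercive on $\Neh_\sigma$. Minimizing $J_\sigma$ over $\Neh_\sigma$, the Palais--Smale condition (free Palais--Smale for the subcritical, hence compact, $J_\sigma$, combined with the natural-constraint property) provides a minimizer $u^\sigma$ that is a free critical point of $J_\sigma$.

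It remains to show that $u^\sigma$ solves the original system and that distinct $\sigma$ give distinct solutions; here the smallness of $D$ enters decisively. A capacity/energy estimate shows that as $D$ shrinks the pieces decouple, the minimal level obeys $c_\sigma=\sum_{i=1}^k\sum_{l\in S_i}m_{i,l}+o(1)$, where $m_{i,l}$ is the least energy of the $i$-th equation on $\Omega_l$, and the constrained minimizer is $o(1)$-small on every unconstrained piece. This smallness makes the penalization inactive, so $u^\sigma$ solves \eqref{eq:bec}, and the strong maximum principle yields $u_i^\sigma>0$ (each component being nontrivial since $S_i\neq\emptyset$). Finally, as the levels $c_\sigma$ are, up to $o(1)$, pairwise distinct additive combinations of the $m_{i,l}$, the map $\sigma\mapsto u^\sigma$ is injective, producing $(2^n-1)^k$ distinct positive solutions. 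I expect the main obstacle to be precisely this last step: establishing the quantitative decoupling and the inactivity of the penalization uniformly in $\sigma$ as $D$ degenerates, thereby turning the abstract natural-constraint output into genuinely distinct, correctly localized solutions of the original system.
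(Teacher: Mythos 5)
There is a genuine gap, and it originates in two technical choices that differ from the paper's construction. First, you build $V^-_x$ from the \emph{orthogonal projections} $P_l x_i$, but the pointwise nonlinearity does not see orthogonal projections: on $\Omega_l$ one has $x_i = P_l x_i + h$ with $h$ harmonic in $\Omega_l$, and nothing in your constraint controls $h$. Consequently your key assertion for (coe) — that the localized Nehari identity $\|P_l x_i\|^2 = \int \mu_i x_i^3\, P_l x_i + \dots$ forces $J''(x)[\eta_{i,l},\eta_{i,l}] = \|P_l x_i\|^2 - 3\mu_i\int x_i^2 (P_l x_i)^2 - \dots \le -\delta\|\eta_{i,l}\|^2$ — does not follow: the needed inequality $3\int x_i^2(P_l x_i)^2 \ge (1+\delta)\int x_i^3 P_l x_i$ is an identity only when $P_l x_i = x_i$ on $\Omega_l$, and can fail when $h$ is large. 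For the same reason the ``uniform norm bounds imposed by the localized Nehari identities'' do not exist: the identity bounds $\|P_l x_i\|^2$ by integrals of $x_i$, not of $P_l x_i$, so it yields neither a lower bound on $\|P_l x_i\|$ (needed for the boundary condition (i), since your $A$ is only $\{P_l x_i \ne 0\}$, with no thresholds $r_l$ and no ball $\|u\|<R$) nor the uniform bounds on $J'$, $J''$, $\partial_x\tau^{-1}$ required by Theorem~\ref{teo:main_intro}. The paper avoids all of this by taking $\xi_{i,l}(u)=\eta_l u_i e_i$ with smooth cutoffs $\eta_l\equiv 1$ on $\Omega_l$: then the vector field coincides with $u_i e_i$ on $\Omega_l$, every error term is supported in $D$, and is controlled precisely by the smallness parameters $|D|$, $C_\eta$ (Lemma~\ref{lem:lontano_da_zero_perturbato}).

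Second, your choice $V^+=\{0\}$ leaves the unconstrained pieces completely free, which forces the Byeon-type penalization and the asymptotic decoupling estimates that you yourself flag as the main obstacle; these are not carried out, and they are the hard part of your plan. The paper shows they are unnecessary: it takes $V^+$ to be the \emph{infinite-dimensional} space of $k$-tuples with $v_i\in H^1_0\bigl(\Omega\setminus\bigcup_{l\in L_i}\overline{\Omega}_l\bigr)$, so that membership in $\Neh$ already makes $u$ a weak solution on the unconstrained region, $J''$ is coercive on $V^+$ there (because Lemma~\ref{lem:lontano_da_zero_perturbato} forces $u_i$ to be small on those pieces), and one minimizes the \emph{original} functional $J$ — this is exactly the situation the abstract theorem was designed for, with $J''$ indefinite on $V_x$. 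Finally, your injectivity argument is wrong as stated: if the $\Omega_l$ are congruent, distinct configurations $\sigma$ give \emph{equal} sums $\sum_i\sum_{l\in S_i} m_{i,l}$, so energy levels cannot separate the solutions. The paper distinguishes them by the localization built into $A$ itself, namely $\int_{\Omega_l}|\nabla u_i|^2 > r_l^2$ for $l\in L_i$ and $<r_l^2$ otherwise, a dichotomy made strict (hence stable) by Lemma~\ref{lem:lontano_da_zero_perturbato}; you could repair your last step the same way, but not via levels.
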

We distinguish the solutions because, using suitable natural constraints, we can prescribe
whether $u_i|_{\Omega_l}$ is either large or small, for every $i=1,\dots,k$, $l=1,\dots,n$.
Note that, in particular, our result holds true in the purely competitive case, i.e. $\beta_{ij}<0$.
The smallness of $D$ will be made precise by suitable assumptions in the following; for instance,
the result holds if $D$ can be decomposed in a finite number of parts, each of which
lies between two hyperplanes sufficiently close.
We wish to mention that related systems in similar domains were considered, from a different point
of view, in \cite{contiFelli09}.

\textbf{Notations.} Given $I\in C^k(X,Y)$, $k\geq 1$, with $X$ and $Y$ Hilbert spaces, and
$x_0,u\in X$, we write
\(
I'(x_0)[u]  \in Y
\)
to denote the (first) differential of $I$ evaluated at $x_0$ along $u$. Analogously,
\(
I''(x_0)[u,v] \in Y
\)
will denote the (bilinear form associated to the) second differential along $(u,v)\in X\times X$.
In case $Y=\R$, a sequence $\{x_n\}_n\subset X$ is a Palais-Smale (PS) sequence for $I$ (at level $c$) if
\[
I(x_n)\to c \quad\text{ and }\quad I'(x_n)\to 0 \text{ in } X^*.
\]
$I$ satisfies the PS-condition (at level $c$) if every PS-sequence admits a converging subsequence.
We say that a subspace $V\subset X$ is proper if $V\neq\left\{0\right\}$ and $V\neq X$. The orthogonal projection of a vector $u\in X$ on $V$ will be denoted by $\proj_V u$.
For $\Omega\subset\R^N$ smooth bounded domain, $\Gamma \subset \partial \Omega$ relatively open
and $p\leq 2^*:=2N/(N-2)$ we denote by $C_S(\Omega,p)$ (resp. $C_S(\Omega,\Gamma,p)$) the Sobolev constant
related to the embedding of $H^1_0(\Omega)$ (resp. $H^1_{0,\Gamma}(\Omega)$) into $L^p(\Omega)$.
Finally, we denote by $C$ any constant we need not to specify.

\section{Generalized Nehari manifolds}\label{sec:abstract}
\subsection{Palais-Smale sequences on natural constraints}

Let $X$, $Y$ be Hilbert spaces, $A\subset X$ open and $G\in C^1(A,Y)$. We
denote by $\Neh$ the zero set of $G$, that is
\[
\Neh:=\{x\in A: \ G(x)=0  \}.
\]
Let us recall a well known condition which ensures that $\Neh$ is a manifold.
\begin{prop}
Let $G\in C^1(A,Y)$. If, for every $x\in \Neh$, $G'(x)$ is surjective and $\ker (G'(x))$
is a proper subspace of $X$, then $\Neh$ is a $C^1$-manifold and the tangent space to $\Neh$
at $x$ is $\ker (G'(x))$.
\end{prop}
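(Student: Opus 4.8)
The plan is to recognize this statement as the submersion theorem in Hilbert spaces and to prove it via the implicit function theorem, exploiting the fact that the Hilbert structure provides an orthogonal splitting for free. First I would fix $x_0\in\Neh$ and set $K:=\ker G'(x_0)$, which is a closed subspace of $X$ because $G'(x_0)$ is a bounded linear operator. Since $X$ is a Hilbert space, $K$ is automatically complemented, so I write the orthogonal decomposition $X=K\oplus K^\perp$ and parametrize points near $x_0$ as $x_0+k+w$ with $k\in K$, $w\in K^\perp$. The hypothesis that $K$ is proper guarantees both $K\neq\{0\}$ (so the resulting manifold is genuinely positive-dimensional) and $K^\perp\neq\{0\}$ (so the equation to be solved below is non-vacuous).

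The key linear step is to show that the restriction $L:=G'(x_0)|_{K^\perp}\colon K^\perp\to Y$ is a bounded isomorphism. Injectivity is immediate, since $K^\perp\cap\ker G'(x_0)=\{0\}$; surjectivity follows from the surjectivity of $G'(x_0)$ together with $X=K+K^\perp$ and $G'(x_0)K=\{0\}$. The bounded inverse theorem then yields that $L^{-1}$ is continuous, so $L$ is a linear homeomorphism onto $Y$.

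With $L$ an isomorphism, the partial differential of $G$ in the $K^\perp$-direction at $x_0$ equals $L$ and is thus invertible. I would then apply the implicit function theorem for $C^1$ maps between Hilbert spaces: there exist neighborhoods of $0$ in $K$ and in $K^\perp$ and a $C^1$ map $\phi$, with $\phi(0)=0$, such that for $x=x_0+k+w$ close to $x_0$ one has $G(x)=0$ if and only if $w=\phi(k)$. Consequently the map $k\mapsto x_0+k+\phi(k)$ is a local $C^1$-chart for $\Neh$ modeled on $K$, which proves that $\Neh$ is a $C^1$-manifold.

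For the tangent space, I would differentiate the identity $G(x_0+k+\phi(k))\equiv 0$ at $k=0$, obtaining $G'(x_0)[k+\phi'(0)k]=0$ for every $k\in K$; since $G'(x_0)$ vanishes on $K$, this reduces to $L\,\phi'(0)k=0$, and the injectivity of $L$ forces $\phi'(0)=0$. Hence the chart has differential $k\mapsto k$ at the origin, and $T_{x_0}\Neh=K=\ker G'(x_0)$. I expect the only genuine obstacle to be the clean identification of $L$ as an isomorphism and the careful bookkeeping of the splitting $X=K\oplus K^\perp$; once these are in place, the infinite-dimensional implicit function theorem applies verbatim.
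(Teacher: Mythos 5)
Your proof is correct and follows essentially the same route as the paper's (sketched) proof: the orthogonal splitting $X=\ker G'(x_0)\oplus\ker G'(x_0)^{\perp}$, the observation that $G'(x_0)$ restricted to the orthogonal complement is a bijection onto $Y$, and the implicit function theorem to produce a local $C^1$-parametrization. You merely fill in details the paper leaves implicit (the bounded inverse theorem to get a linear isomorphism, and the computation $\phi'(0)=0$ identifying the tangent space), which is exactly the right bookkeeping.
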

\begin{proof}[Sketch of the proof]
Being $\ker (G'(x))$ a closed and proper linear subspace, we have the
nontrivial splitting $X=\ker(G'(x))\oplus \ker (G'(x))^{\perp}$.
Now, since $G'(x) : \ker (G'(x))^{\perp} \to Y$ is bijective, the implicit
function theorem applies and this provides a local parametrization of $\Neh$ around $x$.
\end{proof}

Our first aim is to establish some general conditions under which $\Neh$ is a natural
constraint for a functional $J$ defined on $X$.

\begin{prop}\label{prop:abstract_nehari_natural_constraint}
Let $J\in C^1(X,\R)$, $G\in C^1(A,Y)$ and let $\Neh$ be defined as above. Let us
assume that
\[
\text{for every $x\in \Neh$ there exists a closed and proper linear subspace $V_x\subset X$}
\]
such that
\begin{eqnarray}
\label{eq:ass1}&& J'(x)|_{V_x}\text{ is identically zero;}\\
\label{eq:ass2}&& G'(x)|_{V_x}\text{ is surjective onto }Y.
\end{eqnarray}
Then $\Neh$ is a manifold and a natural constraint for $J$.
\end{prop}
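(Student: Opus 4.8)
The plan is to derive both conclusions from the single algebraic decomposition
\[
X = \ker(G'(x)) + V_x, \qquad x\in\Neh,
\]
which follows directly from the surjectivity hypothesis \eqref{eq:ass2}. Indeed, fixing $v\in X$, we have $G'(x)[v]\in Y$; since $G'(x)|_{V_x}$ is onto $Y$, there is $w\in V_x$ with $G'(x)[w]=G'(x)[v]$, so that $v-w\in\ker(G'(x))$ and $v=(v-w)+w$ realizes the splitting. Everything else is a matter of feeding this identity into the previous Proposition and into the definition of constrained critical point.

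First I would verify that $\Neh$ is a manifold by checking the hypotheses of the preceding Proposition. Surjectivity of $G'(x)$ is immediate from \eqref{eq:ass2}. For the kernel to be proper, note that $\ker(G'(x))\neq X$ because $G'(x)$ is onto $Y\neq\{0\}$; conversely, were $\ker(G'(x))$ trivial, the decomposition above would force $X=V_x$, contradicting that $V_x$ is a proper subspace. Hence $\ker(G'(x))$ is proper and the cited Proposition applies, yielding that $\Neh$ is a $C^1$-manifold with $T_x\Neh=\ker(G'(x))$.

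Next, for the natural constraint property, let $x\in\Neh$ be a critical point of the restriction $J|_{\Neh}$. By definition this means $J'(x)[v]=0$ for every $v\in T_x\Neh=\ker(G'(x))$. Combining this with \eqref{eq:ass1}, which gives $J'(x)[v]=0$ for every $v\in V_x$, and invoking the decomposition $X=\ker(G'(x))+V_x$, I conclude that $J'(x)$ annihilates all of $X$: writing an arbitrary $v=v_1+v_2$ with $v_1\in\ker(G'(x))$ and $v_2\in V_x$ gives $J'(x)[v]=J'(x)[v_1]+J'(x)[v_2]=0$. Thus $x$ is a free critical point of $J$.

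I do not anticipate a serious obstacle, as the whole argument rests on the elementary splitting coming from \eqref{eq:ass2}. The only point requiring care is to use the properness of $V_x$ exactly where it is needed, namely to exclude $\ker(G'(x))=\{0\}$ in the manifold step; the identification $T_x\Neh=\ker(G'(x))$ is supplied by the preceding Proposition, so no independent computation of the tangent space is necessary.
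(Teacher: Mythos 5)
Your proof is correct, and the first half (the manifold step) is essentially the paper's: the paper takes a nonzero $v_1\in V_x^{\perp}$ and $v_2\in V_x$ with $G'(x)[v_1]=G'(x)[v_2]$ to produce a nonzero element $v_1-v_2$ of the kernel, which is exactly your decomposition argument run for a single vector. The genuine difference is in the natural-constraint step. The paper invokes the Lagrange multiplier theorem: constrained criticality of $x_0$ yields $\lambda\in Y^*$ with $J'(x_0)=\lambda\circ G'(x_0)$, and then \eqref{eq:ass1} together with surjectivity of $G'(x_0)|_{V_{x_0}}$ forces $\lambda\equiv 0$. You bypass multipliers entirely: you use the identification $T_x\Neh=\ker(G'(x))$ from the preceding proposition, so that constrained criticality means $J'(x)$ vanishes on $\ker(G'(x))$, and then the splitting $X=\ker(G'(x))+V_x$ (which need not be direct, and does not need to be) together with \eqref{eq:ass1} gives $J'(x)\equiv 0$ by linearity. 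Your route is more elementary and self-contained, since the existence of Lagrange multipliers in this infinite-dimensional setting is itself a nontrivial fact requiring the very surjectivity/splitting you make explicit. What the paper's formulation buys is continuity with what follows: Proposition \ref{prop:abstract_nehari_palais_smale} and Corollary \ref{coro:abst_neh} treat sequences satisfying $J'(x_n)-\lambda_n\circ G'(x_n)\to 0$, as produced by Ekeland's principle, and there one must \emph{estimate} $\|\lambda_n\|_{Y^*}$ using the uniform injectivity \eqref{eq:unif_inject}; the qualitative ``kill the multiplier by surjectivity'' argument is precisely the template that gets quantified there, whereas your tangent-space argument does not transfer as directly to approximate critical points, where $J'(x_n)$ vanishes on $T_{x_n}\Neh$ only up to a small error.
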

\begin{proof}
Let us first show that $\ker (G'(x))$ is a proper subspace of $X$, so that,
by the previous proposition, $\Neh$ is a manifold.
Clearly $\ker (G'(x))$ can not be the entire space, by \eqref{eq:ass2}.
Let $0\neq v_1\in V_x^{\perp}$ (which exists since $V_x$ is proper). By \eqref{eq:ass2}
there exists $v_2\in V_x$ such that $G'(x)[v_1]=G'(x)[v_2]$, hence
$v_1-v_2\in \ker (G'(x))$.
We turn to the second part of the statement.
Let $x_0\in \Neh$ be a critical point of $J$ constrained to $\Neh$.
Then there exists a Lagrange multiplier $\lambda \in Y^*$ such that
\[
J'(x_0)[x]=\lambda\left[G'(x_0)[x]\right] \qquad \text{ for every } x\in X.
\]
In particular we have
\[
\lambda\left[G'(x_0)[v]\right]=J'(x_0)[v]=0 \qquad \text{ for every } v\in V_{x_0},
\]
and being $G'(x_0)$ surjective on $V_{x_0}$ we deduce that $\lambda\equiv0$, i.e.
$x_0$ is a free critical point of $J$.
\end{proof}
When searching for critical points of $J$, a typical strategy consists in selecting
a candidate critical value via some variational principle, and then to exploit
some compactness, usually in the form of a Palais-Smale condition. Since on natural
constraints free critical points coincide with constrained ones, it is
natural to wonder if a similar equivalence holds for Palais-Smale sequences too.
It comes out that, in our setting, while the first property depends on
the surjectivity of $G'|_V$, the latter one leans on the uniform injectivity of
the same operator.
\begin{prop}\label{prop:abstract_nehari_palais_smale}
Under the assumptions of Proposition \ref{prop:abstract_nehari_natural_constraint},
let us assume moreover that there exist positive constants $\rho$, $\rho'$, such that
%\begin{equation}
\begin{gather}
\|G'(x)[v]\|_Y \geq \rho \|v\|_X \quad \text{ for every }x\in \Neh,\, v\in V_x, \label{eq:unif_inject}\\
\|G'(x)[u]\|_Y \leq \rho' \|u\|_X \quad \text{ for every }x\in \Neh,\, u\in X.\label{eq:unif_bound}
\end{gather}
%\end{equation}
Then, for every sequence $\{(x_n,\lambda_n)\}\subset \Neh\times Y^*$,
\[
J'(x_n)-\lambda_n\circ G'(x_n)\to 0 \ \text{ in } X^* \qquad\implies\qquad
J'(x_n)\to0 \ \text{ in } X^*.
\]
\end{prop}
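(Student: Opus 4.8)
The plan is to show that the Lagrange-multiplier term $\lambda_n\circ G'(x_n)$ tends to zero in $X^*$; since $J'(x_n)=\bigl(J'(x_n)-\lambda_n\circ G'(x_n)\bigr)+\lambda_n\circ G'(x_n)$ and the first summand vanishes in $X^*$ by hypothesis, this suffices. I would set $\eta_n:=J'(x_n)-\lambda_n\circ G'(x_n)$, so that $\|\eta_n\|_{X^*}\to0$, and observe that by the uniform bound \eqref{eq:unif_bound} one has, for every $u\in X$, the estimate $|\lambda_n[G'(x_n)[u]]|\le\|\lambda_n\|_{Y^*}\,\|G'(x_n)[u]\|_Y\le\rho'\,\|\lambda_n\|_{Y^*}\,\|u\|_X$, whence $\|\lambda_n\circ G'(x_n)\|_{X^*}\le\rho'\,\|\lambda_n\|_{Y^*}$. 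Thus everything reduces to proving that $\|\lambda_n\|_{Y^*}\to0$.

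To control $\|\lambda_n\|_{Y^*}$ I would test $\lambda_n$ against an arbitrary $y\in Y$, exploiting the fact that $G'(x_n)$ restricted to $V_{x_n}$ is a bijection onto $Y$. Indeed, the lower bound \eqref{eq:unif_inject} forces $G'(x_n)|_{V_{x_n}}$ to be injective, and together with the surjectivity \eqref{eq:ass2} this means that for each $y\in Y$ there is a unique $v_n=v_n(y)\in V_{x_n}$ with $G'(x_n)[v_n]=y$; moreover \eqref{eq:unif_inject} gives $\|v_n\|_X\le\rho^{-1}\|y\|_Y$, uniformly in $n$. The crucial point is now that $J'$ annihilates $V_{x_n}$ by \eqref{eq:ass1}, so that $\eta_n[v_n]=J'(x_n)[v_n]-\lambda_n[G'(x_n)[v_n]]=-\lambda_n[y]$.

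Combining the two estimates yields $|\lambda_n[y]|=|\eta_n[v_n]|\le\|\eta_n\|_{X^*}\|v_n\|_X\le\rho^{-1}\|\eta_n\|_{X^*}\|y\|_Y$ for every $y\in Y$, hence $\|\lambda_n\|_{Y^*}\le\rho^{-1}\|\eta_n\|_{X^*}\to0$. Feeding this back into the bound of the first paragraph gives $\|J'(x_n)\|_{X^*}\le\|\eta_n\|_{X^*}+\rho'\,\|\lambda_n\|_{Y^*}\le(1+\rho'\rho^{-1})\|\eta_n\|_{X^*}\to0$, which is the assertion.

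As for the difficulties: the argument is essentially a uniform, quantitative version of the multiplier-vanishing step in the proof of Proposition \ref{prop:abstract_nehari_natural_constraint}, and no single step is hard in isolation. The one point that must be handled with care is the uniformity in $n$: it is precisely the two constants $\rho$ and $\rho'$, independent of $x\in\Neh$, that guarantee $\|\lambda_n\|_{Y^*}\to0$ rather than merely staying bounded, and that the inverse of $G'(x_n)|_{V_{x_n}}$ has operator norm bounded uniformly by $\rho^{-1}$. I would therefore stress that \eqref{eq:unif_inject} and \eqref{eq:unif_bound} hold with the same $\rho,\rho'$ along the whole sequence, and note that no compactness or convergence of $\{x_n\}$ itself is invoked anywhere.
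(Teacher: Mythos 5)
Your proof is correct and follows essentially the same route as the paper's: both exploit that $J'(x_n)$ annihilates $V_{x_n}$ to bound $\lambda_n$ on the image $G'(x_n)[V_{x_n}]=Y$, obtaining $\|\lambda_n\|_{Y^*}\le\rho^{-1}\|\eta_n\|_{X^*}\to0$ via \eqref{eq:unif_inject}, and then transfer this back to $X^*$ via \eqref{eq:unif_bound}. Your formulation in terms of explicit preimages $v_n(y)$ is just a rephrasing of the paper's supremum computation, so there is nothing substantive to add.
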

\begin{proof}
By definition we have
\[
\sup_{\substack{u\in X\\u\neq0}}\frac{|J'(x_n)[u]-\lambda_n[G'(x_n)[u]]|}{\|u\|_X}
= \|J'(x_n)-\lambda_n\circ G'(x_n)\|_{X^*}\to 0.
\]
Since $J'(x_n)[v]=0$ for every $v\in V_{x_n}$, we deduce that
\[
\sup_{\substack{v\in V_{x_n}\\v\neq0}}\frac{|\lambda_n[G'(x_n)[v]]|}{\|v\|_X} \to0.
\]
Now, recalling that $G'(x_n)$ restricted to $V_{x_n}$ is surjective, we deduce that
\[
\|\lambda_n\|_{Y^*}=\sup_{\substack{y\in Y\\y\neq0}}\frac{|\lambda_n[y]|}{\|y\|_Y}=
\sup_{\substack{v\in V_{x_n}\\v\neq0}} \frac{|\lambda_n[G'(x_n)[v]]|}{\|G'(x_n)[v]\|_Y} \leq
\frac{1}{\rho}\sup_{\substack{v\in V_{x_n}\\v\neq0}} \frac{|\lambda_n[G'(x_n)[v]]|}{\|v\|_X}\to0.
\]
Finally, the uniform continuity implies
\[
\sup_{\substack{u\in X\\u\neq0}}\frac{|\lambda_n[G'(x_n)[u]]|}{\|u\|_X} \leq
\|\lambda_n\|_{Y^*} \sup_{\substack{u\in X\\u\neq0}}\frac{\|G'(x_n)[u]\|_Y}{\|u\|_X} \leq
\rho' \|\lambda_n\|_{Y^*},
\]
which concludes the proof.
\end{proof}

Under standard additional assumptions, the previous result ensures the existence of a critical point of $J$ belonging to $\Neh$.

\begin{coro}\label{coro:abst_neh}
In the assumptions of Propositions \ref{prop:abstract_nehari_natural_constraint} and
\ref{prop:abstract_nehari_palais_smale}, suppose moreover that
\[
\inf_{x\in \overline{\Neh}\setminus \Neh} J(x) > \inf_{x\in \Neh} J(x) =: c \in \R
\]
and that $J$ satisfies the Palais-Smale condition at level $c$.
Then there exists $x_0 \in \Neh$ such that $J(x_0)=c$ and $J'(x_0)=0$.
\end{coro}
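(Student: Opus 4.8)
The plan is to produce a minimizing sequence for $J$ on $\Neh$ and upgrade it to a constrained Palais-Smale sequence, then apply the previous proposition to convert it to a free Palais-Smale sequence, and finally use compactness to extract a critical point at the desired level. Since $c=\inf_{\Neh}J\in\R$ is finite, I would first invoke Ekeland's variational principle (in the form valid on the complete metric space $\Neh$, using that $\Neh$ is a $C^1$-manifold hence locally a Banach space and, crucially, that it is \emph{complete} — this is where the gap condition enters). Ekeland's principle yields a sequence $\{x_n\}\subset\Neh$ with $J(x_n)\to c$ and such that $x_n$ is ``almost critical'' for $J$ constrained to $\Neh$; that is, the norm of the differential of $J|_{\Neh}$ at $x_n$ tends to zero, which in Lagrange-multiplier language means there exist $\lambda_n\in Y^*$ with
\[
J'(x_n)-\lambda_n\circ G'(x_n)\to 0 \quad\text{in } X^*.
\]

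The second step is precisely the content of Proposition \ref{prop:abstract_nehari_palais_smale}: from the above I conclude $J'(x_n)\to 0$ in $X^*$, so that $\{x_n\}$ is in fact a \emph{free} Palais-Smale sequence for $J$ at level $c$. The third step invokes the standing hypothesis that $J$ satisfies the Palais-Smale condition at level $c$: this provides a subsequence $x_{n_j}\to x_0$ for some $x_0\in X$. By continuity of $J$ we get $J(x_0)=c$, and by continuity of $J'$ we get $J'(x_0)=0$, so $x_0$ is a free critical point. It remains to check that $x_0\in\Neh$, which is the only place the strict inequality hypothesis $\inf_{\overline{\Neh}\setminus\Neh}J>c$ is really used: since each $x_n\in\Neh$, the limit $x_0$ lies in $\overline{\Neh}$; if $x_0$ were in $\overline{\Neh}\setminus\Neh$ we would have $J(x_0)\geq\inf_{\overline{\Neh}\setminus\Neh}J>c$, contradicting $J(x_0)=c$. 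Hence $x_0\in\Neh$.

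The main obstacle I anticipate is the rigorous deployment of Ekeland's principle on $\Neh$: one must ensure that $\Neh$, with the metric inherited from $X$, is a complete metric space so that Ekeland applies, and that the resulting approximate-minimality statement really translates into the vanishing of the constrained differential. The completeness is exactly what the assumption $\inf_{\overline{\Neh}\setminus\Neh}J>\inf_\Neh J$ buys us, since it prevents a minimizing sequence from escaping to the boundary $\overline{\Neh}\setminus\Neh$; more precisely, sublevel sets $\{x\in\Neh : J(x)\leq c+\eps\}$ are, for small $\eps$, bounded away from $\overline{\Neh}\setminus\Neh$ and hence closed in $X$, so Ekeland may be run on such a sublevel set. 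The translation from ``small constrained differential'' to ``existence of small Lagrange multipliers'' is routine given that $\Neh$ is a $C^1$-manifold with tangent space $\ker G'(x)$ (from the earlier proposition), and that the normal directions are controlled by the surjectivity of $G'(x)|_{V_x}$ from \eqref{eq:ass2}. With these points secured, the remaining steps are immediate consequences of the preceding propositions and the Palais-Smale hypothesis.
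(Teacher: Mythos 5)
Your proposal is correct and follows essentially the same route as the paper: Ekeland's variational principle (the paper applies it to $\overline{\Neh}$, you to closed sublevel sets of $\Neh$ — equivalent devices, both made legitimate by the gap hypothesis), then Proposition \ref{prop:abstract_nehari_palais_smale} to upgrade the constrained Palais-Smale sequence to a free one, then the PS condition together with the gap condition to place the limit in $\Neh$. The paper compresses the final steps into ``the conclusion follows in a standard way''; your write-up merely makes them explicit.
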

\begin{proof}
By Ekeland's variational principle \cite{struwe08book} applied to $\overline{\Neh}$ there exists
$\{x_n\}\subset \Neh$ and $\{\lambda_n\}\subset Y^*$ such that
\[
J(x_n)\to c \qquad \text{ and }\qquad J'(x_n)-\lambda_n\circ G'(x_n)\to 0 \ \text{ in } X^*
\]
as $n\to+\infty$. By the previous proposition $J'(x_n)\to0$ in $X^*$, and the conclusion follows in a standard way.
\end{proof}

\begin{rem}\label{rem:only_min_sequences}
In order to prove the previous result it suffices to assume conditions
\eqref{eq:unif_inject} and \eqref{eq:unif_bound} only on minimizing sequences.
\end{rem}

\subsection{Proof of Theorem \ref{teo:main_intro}}

A remarkable particular case of the structure just introduced is when the closed linear
subspaces $V_x$ depend in a smooth way on $x$ and $G(x)$ is the projection of $\nabla J(x)$ on $V_x$.
In this case, assumption \eqref{eq:ass1} on $J'$ is tautologically satisfied, while we will show that assumption \eqref{eq:unif_inject} on $G'$ can be expressed in terms of $J''$, in case the subspaces are invariant under differentiation.
% From this point of view, Theorem \ref{teo:main_intro}
% says that in such a case the assumption on  $J'$ is tautologically satisfied, while the ones on $G'$
% can be expressed in terms of $J''$.

In view of the application of Proposition \ref{prop:abstract_nehari_palais_smale}, we set
\[
\begin{array}{ll}
Y:=\mathbb{V}^+\times\mathbb{V}^-,
&\langle y,z\rangle_Y := \langle y^+,z^+\rangle_{\mathbb{V}^+} + \langle y^-,z^-\rangle_{\mathbb{V}^-},\smallskip\\
G:A\to Y,&G(x):=(G^+(x),-G^-(x)),
\end{array}
\]
where $y=(y^+,y^-)$, $z=(z^+,z^-)$ and
\[
G^\pm(x):=\tau^{\pm}_x\proj_{V^\pm_x}\nabla J(x),
\]
so that the set $\Neh$ which appears in the statement of Theorem \ref{teo:main_intro} is indeed
the null set of $G$.
Notice first that since $V^\pm$ are $C^1$-subbundles of $TA$,
then $G\in C^1(A,Y)$. As a consequence we can evaluate $G'(x)$, first  along directions in $V_x$ and next along
directions in $X$.
\begin{lemma}\label{lemma:main_lemma}
For every $\bar x\in\Neh$, $\bar v^+\in V^+_{\bar x}$, $\bar w\in V_{\bar x}$ it holds
\begin{equation*}
\left\langle (G^+)'(\bar x)[\bar w],\tau^+_{\bar x} \bar v^+\right\rangle_{\mathbb{V}^+} =
J''(\bar x)[\bar w,\bar v^+]
\end{equation*}
(and an analogous property holds for $G^-$).
\end{lemma}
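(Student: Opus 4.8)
The plan is to reduce the differentiation of the $\Vb^+$-valued map $G^+$ to that of a scalar function, obtained by pairing $G^+(x)$ against a \emph{fixed} target vector, and then to recognize this scalar as a restriction of $J'$ along a suitable section. Fix $\bar x\in\Neh$, $\bar v^+\in V^+_{\bar x}$ and $\bar w\in V_{\bar x}$, and consider
\[
h(x):=\left\langle G^+(x),\tau^+_{\bar x}\bar v^+\right\rangle_{\Vb^+},
\]
so that, by definition of the differential, $h'(\bar x)[\bar w]=\langle (G^+)'(\bar x)[\bar w],\tau^+_{\bar x}\bar v^+\rangle_{\Vb^+}$ is exactly the left-hand side to be computed. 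The virtue of this reduction is that $\tau^+_{\bar x}\bar v^+\in\Vb^+$ is a constant vector, carrying none of the $x$-dependence.

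The first step is to transfer the inner product from $\Vb^+$ back to $X$. Since each $\tau^+_x$ is an isometric isomorphism of $V^+_x$ onto $\Vb^+$, its inverse coincides with its adjoint, so that $\langle \tau^+_x a,b\rangle_{\Vb^+}=\langle a,(\tau^+_x)^{-1}b\rangle_X$ for $a\in V^+_x$, $b\in\Vb^+$. Applying this with $a=\proj_{V^+_x}\nabla J(x)$ and $b=\tau^+_{\bar x}\bar v^+$, and setting
\[
\eta(x):=(\tau^+_x)^{-1}\left(\tau^+_{\bar x}\bar v^+\right),
\]
I obtain $h(x)=\langle \proj_{V^+_x}\nabla J(x),\eta(x)\rangle_X$. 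By construction $\eta$ is a $C^1$-section of $V^+$ (this is where the assumed regularity of $\partial_x(\tau^+_x)^{-1}$ enters), with $\eta(\bar x)=\bar v^+$ and $\eta(x)\in V^+_x$ for every $x$. The last property allows me to drop the projection: as $\proj_{V^+_x}$ is self-adjoint and fixes $\eta(x)$, one has $h(x)=\langle \nabla J(x),\eta(x)\rangle_X=J'(x)[\eta(x)]$.

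It then remains to differentiate the composition $x\mapsto J'(x)[\eta(x)]$ at $\bar x$ along $\bar w$. Since $J\in C^2$ and $\eta\in C^1$, the Leibniz rule yields
\[
h'(\bar x)[\bar w]=J''(\bar x)[\bar w,\eta(\bar x)]+J'(\bar x)\big[\eta'(\bar x)[\bar w]\big].
\]
The first term equals $J''(\bar x)[\bar w,\bar v^+]$ because $\eta(\bar x)=\bar v^+$. The second term vanishes: $\eta$ is in particular a $C^1$-section of $V=V^++V^-$, so hypothesis (inv), applied with $v=\bar w\in V_{\bar x}$, gives $\eta'(\bar x)[\bar w]\in V_{\bar x}$; and since $\bar x\in\Neh$ means $\proj_{V_{\bar x}}\nabla J(\bar x)=0$, the functional $J'(\bar x)$ annihilates $V_{\bar x}$. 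This establishes the identity, and the analogous statement for $G^-$ follows verbatim, with the obvious adjustment of signs.

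I expect the only genuinely delicate point to be the first step: correctly exploiting that each $\tau^+_x$ is a fiberwise isometry defined on $V^+_x$ (so that the target pairing in $\Vb^+$ legitimately becomes an $X$-pairing against $\eta(x)$), together with the verification that $\eta$ is indeed a $C^1$-section of $V$, which is what makes (inv) applicable. Once these are in place, the result is just the product rule combined with the defining property of $\Neh$; in effect, the lemma is the precise sense in which the ``invariance under differentiation'' hypothesis forces the derivative term $J'(\bar x)[\eta'(\bar x)[\bar w]]$ to drop out, isolating the quadratic form $J''$ anticipated in the introduction.
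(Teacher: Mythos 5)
Your proof is correct and follows essentially the same route as the paper's: you construct the same $C^1$-section $\eta(x)=(\tau^+_x)^{-1}\tau^+_{\bar x}\bar v^+$ (the paper's $\xi$), reduce the pairing to $\langle\nabla J(x),\eta(x)\rangle_X$ via the fiberwise isometry and the projection, and then apply the product rule, killing the term $J'(\bar x)[\eta'(\bar x)[\bar w]]$ by (inv) together with $\proj_{V_{\bar x}}\nabla J(\bar x)=0$. Your write-up is in fact slightly more explicit than the paper's on the adjoint/isometry step and on why the last term vanishes, but the argument is identical in substance.
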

\begin{proof}
Let $\xi(x)=(\tau_x^+)^{-1}\tau_{\bar x}^+\bar v^+$. Note that $(\cdot,\xi(\cdot))$ is $C^1$-section of $V^+$, i.e. $\xi(x)\in V^+_x$ for every $x\in A$, and $\xi(\bar x)=\bar v^+$. By definition of projection we have that
\[
\begin{split}
\langle G^+(x),\tau^+_{\bar x} \bar v^+ \rangle_{\mathbb{V}^+}  &=
\langle \proj_{V^+_x}\nabla J(x),\xi(x) \rangle_X \\
&=\langle \nabla J(x),\xi(x)\rangle_X.
\end{split}
\]
By differentiating the previous expression at $\bar x$, along $\bar w\in V_{\bar x}$, we obtain
\begin{equation}\label{eq:diff_bundle}
\left\langle (G^+)'(\bar x)[\bar w],\tau^+_{\bar x} \bar v^+\right\rangle_{\mathbb{V}^+} =
J''(\bar x)[\bar w,\bar v^+]+\langle \nabla J(\bar x), \xi'(\bar x)[\bar w] \rangle_X,
\end{equation}
where the last term vanishes because of assumption (inv).
% By properties (vb1) and (vb2), we can differentiate the previous equality.
%The differential of the left hand side at $\bar x$ along $\bar w$
%writes (recall that $\bar v^+ \in V^+_{\bar x}$)
%\[
%\langle (G^+)'(\bar x)[\bar w],\tau^+_{\bar x}\bar v^+\rangle_{\mathbb{V}^+} +
%\langle G^+(\bar x),(\tau^+ \proj_{V^+}\bar v^+)'(x)\rangle_{\mathbb{V}^+},
%\]
%which, since $G^+(\bar x)=0$,
%equals the left hand side of \eqref{eq:diff_bundle}.
%The differential of the right hand side is
%\[
%J''(\bar x)[\bar w,\bar v^+]+\langle \nabla J(\bar x), \xi'(\bar x)[\bar w] \rangle_X
%\]
%which equals the right hand side of \eqref{eq:diff_bundle} because of property (vb3) and because $\bar x\in \Neh$	.
\end{proof}
\begin{lemma}\label{lemma:uniform_bound}
There exists a positive constant $\rho'$ such that for every $x\in\Neh$ and $u\in X$ it holds
\begin{equation*}
\|G'(x)[u]\|_Y\leq \rho'\|u\|_X.
\end{equation*}
\end{lemma}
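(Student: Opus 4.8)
The plan is to reduce the estimate on $\|G'(x)[u]\|_Y$ to separate bounds on the two components $(G^+)'(x)[u]\in\mathbb{V}^+$ and $(G^-)'(x)[u]\in\mathbb{V}^-$, since by definition of the product norm on $Y$ one has $\|G'(x)[u]\|_Y^2=\|(G^+)'(x)[u]\|_{\mathbb{V}^+}^2+\|(G^-)'(x)[u]\|_{\mathbb{V}^-}^2$. I would estimate each component by testing against unit vectors of the fibre, exploiting the very identity that underlies Lemma \ref{lemma:main_lemma}.

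First I would record that, exactly as in the proof of Lemma \ref{lemma:main_lemma}, for every $\eta\in\mathbb{V}^+$ and every $x\in A$ one has the pointwise identity
\[
\langle G^+(x),\eta\rangle_{\mathbb{V}^+}=\big\langle \nabla J(x),(\tau^+_x)^{-1}\eta\big\rangle_X,
\]
obtained by writing $\eta=\tau^+_x\,(\tau^+_x)^{-1}\eta$ and using that $\tau^+_x$ is an isometry together with the definition of the orthogonal projection (here $(\tau^+_x)^{-1}\eta$ plays the role of the section $\xi$). The crucial point, and the difference with Lemma \ref{lemma:main_lemma}, is that now I differentiate this identity along an \emph{arbitrary} direction $u\in X$, not merely along $V_x$, so that assumption (inv) is no longer available to discard the lower-order term. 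Differentiating at $x\in\Neh$ along $u$ and using the product rule gives
\[
\langle (G^+)'(x)[u],\eta\rangle_{\mathbb{V}^+}=J''(x)\big[u,(\tau^+_x)^{-1}\eta\big]+\big\langle \nabla J(x),\partial_x(\tau^+_x)^{-1}[\eta,u]\big\rangle_X.
\]

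Next I would estimate the two terms on the right-hand side. By the uniform boundedness hypotheses of Theorem \ref{teo:main_intro}, there are constants, independent of $x\in\Neh$, bounding $J''(x)$ as a bilinear form, $J'(x)=\langle\nabla J(x),\cdot\rangle_X$ as a linear functional, and $\partial_x(\tau^+_x)^{-1}$ as a bilinear map. Since $(\tau^+_x)^{-1}$ is an isometry we have $\|(\tau^+_x)^{-1}\eta\|_X=\|\eta\|_{\mathbb{V}^+}$, so the first term is controlled by $\|u\|_X\|\eta\|_{\mathbb{V}^+}$ and the second, using $\|\partial_x(\tau^+_x)^{-1}[\eta,u]\|_X\lesssim\|\eta\|_{\mathbb{V}^+}\|u\|_X$, likewise by $\|u\|_X\|\eta\|_{\mathbb{V}^+}$. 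Taking the supremum over $\eta$ with $\|\eta\|_{\mathbb{V}^+}=1$ yields $\|(G^+)'(x)[u]\|_{\mathbb{V}^+}\le C\|u\|_X$ with $C$ independent of $x\in\Neh$ and $u$; the argument for $G^-$ is identical.

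Finally I would combine the two component bounds through the product norm on $Y$, obtaining the claim with, say, $\rho'=\sqrt{2}\,C$. I do not expect any serious obstacle: once the pointwise identity is differentiated correctly, the statement is a bookkeeping estimate. The only genuine subtlety is conceptual rather than computational, namely that retaining and controlling the term $\langle\nabla J(x),\partial_x(\tau^+_x)^{-1}[\eta,u]\rangle_X$, which vanished in Lemma \ref{lemma:main_lemma} thanks to (inv), is precisely what forces the uniform boundedness of $J'$ and of $\partial_x(\tau^\pm_x)^{-1}$ to appear among the hypotheses of Theorem \ref{teo:main_intro}.
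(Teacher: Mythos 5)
Your proof is correct and follows essentially the same route as the paper: both differentiate the identity $\langle G^+(x),\eta\rangle_{\mathbb{V}^+}=\langle\nabla J(x),(\tau^+_x)^{-1}\eta\rangle_X$ along an arbitrary direction $u\in X$, retain the extra term involving $\partial_x(\tau^+_x)^{-1}$ (which vanished in Lemma \ref{lemma:main_lemma} only thanks to (inv)), and then invoke the uniform boundedness of $J'$, $J''$ and $\partial_x(\tau^\pm_x)^{-1}$ on $\Neh$. The only difference is cosmetic bookkeeping at the end: you bound each component by taking the supremum over unit vectors $\eta$ and use the Pythagorean structure of $Y$, whereas the paper pairs $G'(x)[u]$ against its own decomposition $\tau^+_xg^++\tau^-_xg^-$ and divides by $\|G'(x)[u]\|_Y$, both yielding the same constant $\rho'=\sqrt{2}\rho''$.
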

\begin{proof}
To start with, we claim that
there exists a positive constant $\rho''$ such that for every $x\in\Neh$, $v^+\in V^+_{x}$ and $u\in X$ it holds
\begin{equation*}
\left| \left\langle (G^+)'(x)[u],\tau^+_{x} v^+\right\rangle_{\mathbb{V}^+} \right| \leq \rho'' \|v^+\|_X \|u\|_X,
\end{equation*}
and an analogous property holds for $G^-$. Indeed,
reasoning as in the previous lemma, we have that \eqref{eq:diff_bundle} holds with $u\in X$ instead of $\bar w\in V_{\bar x}$. The claim follows, recalling the definition of $\xi$, by the assumptions of uniform boundedness on $J'$, $J''$ and $\partial_x(\tau_x^+)^{-1}$.  Now, by isomorphism, vectors $g^\pm \in V_x^\pm$ are uniquely determined so that
$G'(x)[u]=\tau^+_xg^++\tau_x^-g^-$. With this notation we have
\[
\begin{split}
\|G'(x)[u]\|_Y^2 &=
\langle (G^+)'(x)[u], \tau^+_xg^+ \rangle_Y - \langle (G^+)'(x)[u], \tau_x^-g^- \rangle_Y \\
&\leq \rho'' \|u\|_X (\|g^+\|_X+\|g^-\|_X)= \rho'' \|u\|_X (\|\tau_x^+g^+\|_{\Vb^+}+\|\tau_x^-g^-\|_{\Vb^-})\\
&\leq \sqrt{2} \rho'' \|u\|_X\cdot \|\tau^+_xg^++\tau_x^-g^-\|_Y
=\rho' \|u\|_X\cdot\|G'(x)[u]\|_Y.\qedhere
\end{split}
\]
\end{proof}
We notice that $\tau^\pm$ induce a global $C^1$-trivialization $\tau:V\to A\times Y$, with fiber
\[
\tau_x:V^++V^-\to Y, \qquad
\tau_x:v^++v^-\mapsto (\tau_x^+ v^+,\tau_x^- v^-).
\]
Even though $\tau_x$ needs not to be an isometry, we have that $\langle \tau^+_x v^+,\tau^-_x v^-\rangle_Y=0$ for every $v^\pm\in V_x^\pm$, and hence
\begin{equation*}%\label{eq:norms}
\|\tau_x v\|_Y^2=\|v^+\|_X^2+\|v^-\|_X^2\geq\frac{1}{2}\|v\|_X^2.
\end{equation*}
\begin{proof}[Proof of Theorem \ref{teo:main_intro}]
We apply Proposition \ref{prop:abstract_nehari_palais_smale} to our context. Assumption
\eqref{eq:ass1} holds by definition, since
$J'(x)$ identically vanishes along vectors of $V_x$. As it regards \eqref{eq:ass2}, for fixed $x\in\Neh$,
$y\in Y$, we search for $w\in V_x$ such that $G'(x)[w]=y$. This is equivalent
to solving the abstract variational problem
\[
a(w,v) = \langle y , \tau_x v \rangle_Y \qquad\text{for every }v\in V_x,
\]
where $a(w,v)$ is the following bilinear form on $V_x$
\[
\begin{split}
a(w,v) & := \langle G'(x)[w],\tau_{x} v \rangle_{Y} \\
  &= \langle (G^+)'(x)[w],\tau^+_{x} v^+ \rangle_{\mathbb{V}^+} -
     \langle (G^-)'(x)[w],\tau^-_{x} v^- \rangle_{\mathbb{V}^-} \\
  &= J''(x)[w,v^+] - J''(x)[w,v^-]
\end{split}
\]
(in the last equality we used Lemma \ref{lemma:main_lemma}).
Such a problem can be easily solved by applying Lax-Milgram Theorem, since $a(w,v)$ is bounded by Lemma \ref{lemma:uniform_bound} and it is coercive because
\[
\begin{split}
a(v,v) & = J''(x)[v^+,v^+] - J''(x)[v^-,v^-] \\
& \geq \delta (\|v^+\|^2_X + \|v^-\|^2_X)=\delta \|\tau_x v\|_Y^2\geq\frac{\delta}{2}\|v\|^2_X,
\end{split}
\]
where we used the fact that $J''(x)$ is symmetric and assumption (coe). The last calculation also provides the validity of \eqref{eq:unif_inject} as follows
\[
\|G'(x)[v]\|_Y\cdot \|\tau_x v\|_Y \geq a(v,v) \geq \delta \|\tau_x v\|^2_Y \geq \frac{\delta}{\sqrt{2}}\|v\|_X \cdot \|\tau_x v\|_Y.
\]
Finally, \eqref{eq:unif_bound} was proved in Lemma \ref{lemma:uniform_bound}, so that all the assumptions of Proposition \ref{prop:abstract_nehari_palais_smale} hold true.
\end{proof}
%
%\begin{rem}\label{rem:only_on_Nehari}
%It is worthwhile noticing that, in the proof above, assumption (inv) is required
%to hold only for $x\in\Neh$.
%\end{rem}
%%
%\subsection{A particular case}\label{subsec:further_remarks}
%
To conclude the section we provide a version of Theorem \ref{teo:main_intro} specialized to
the applications we will present next.
% and we read Definition \ref{defi:splitting_invariant}
% in the language of vector bundles.
%
\begin{teo}\label{teo:applications_specialized}
Let $X$ be a Hilbert space, $J\in C^2(X,\R)$, $V^+\subset X$ a fixed closed
linear subspace. We define
\[
V^+_x\equiv V^+,\qquad V^-_x:=\spann\left\{\xi_1(x),\dots,\xi_h(x)\right\},\qquad V_x := V^+_x \oplus V^-_x,
\]
with $\xi_i \in C^1(A,X)$ for every $i=1,\dots,h$, $A\subset X$ open, in such a way that $V_x$ is proper.
As usual, let
\[
\Neh :=\left\{x\in A:\,\proj_{V_x}\nabla J(x)=0 \right\}\qquad\text{and}\qquad c:= \inf_{\Neh} J.
\]
Let us suppose that
\begin{enumerate}
 \item[(i)] $c\in \R$, $\inf_{\overline{\Neh}\setminus\Neh} J > c$;
 \item[(ii)] $J$ satisfies the PS-condition at level $c$.
\end{enumerate}
Moreover, let us assume that
for some $0<\delta<\delta'$ there holds, for every $x\in\Neh$ with $J(x)\leq c+1$,
\begin{enumerate}
 \item[(iii)] $\|\xi_i(x)\|_X\geq\delta$, $\langle \xi_i(x),\xi_j(x)\rangle_X=0$,
 for every $i\neq j$;
 \item[(iv)] $\xi'_i(x)[v]\in V_x$ for every $i$ and $v\in V_x$;
 \item[(v)] $\pm J''(x)[v,v] \geq \delta \|v\|_X^2$ for every $v\in V^\pm_x$;
 \item[(vi)] $\|\xi'_i(x)[u]\|_X\leq \delta'\|u\|_X$, $|J'(x)[u]|\leq \delta'\|u\|_X$ and
 $|J''(x)[u,w]| \leq \delta' \|u\|_X\|w\|_X$ for every  $u,w\in X$.
\end{enumerate}
Then there exists $x_0 \in \Neh$ such that $J(x_0)=c$ and $J'(x_0)=0$.
\end{teo}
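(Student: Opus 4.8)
The plan is to read the statement as the concrete instance of Theorem~\ref{teo:main_intro} in which $V^+$ is a \emph{constant} subbundle and $V^-$ is the finite-dimensional bundle spanned by the $\xi_i$, and then to extract the minimizer from Corollary~\ref{coro:abst_neh}. First I would manufacture the two isometric $C^1$-trivializations. For $V^+_x\equiv V^+$ I take $\Vb^+:=V^+$ and $\tau^+_x:=\mathrm{id}$, which is an isometry with $x$-independent inverse. The family $\{\xi_i(x)\}$ is only orthogonal, so I normalize it: by (iii) one has $\|\xi_i(x)\|_X\geq\delta>0$, hence $e_i(x):=\xi_i(x)/\|\xi_i(x)\|_X$ is $C^1$ and $\{e_i(x)\}$ is an orthonormal basis of $V^-_x$; setting $\Vb^-:=\R^h$ and
\[
\tau^-_x v:=\big(\langle v,e_1(x)\rangle_X,\dots,\langle v,e_h(x)\rangle_X\big)
\]
yields an isometry on each fiber, with $(\tau^-_x)^{-1}(t_1,\dots,t_h)=\sum_i t_i e_i(x)$. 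Since $V_x=V^+\oplus V^-_x$ is proper by hypothesis, the requirements $V^+_x\cap V^-_x=\{0\}$ and $V_x\subsetneq T_xA$ of Theorem~\ref{teo:main_intro} hold.

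Next I would verify the structural hypotheses. Condition (coe) is literally (v). For (inv), I would use the $C^1$ direct sum structure to split an arbitrary $C^1$-section $\eta$ of $V$ as $\eta=w+\sum_i a_i\xi_i$, with $w$ a $C^1$-section of $V^+$ and $a_i\in C^1(A,\R)$; then $w'(x)[v]\in V^+$ because $V^+$ is a fixed closed subspace, while $(a_i\xi_i)'(x)[v]=a_i'(x)[v]\,\xi_i(x)+a_i(x)\,\xi_i'(x)[v]\in V_x$ by (iv). This is precisely the reduction of (inv) to (iv) already noted after Theorem~\ref{teo:main_intro}. The uniform boundedness of $J'$ and $J''$ is exactly (vi). It remains to bound $\partial_x(\tau^\pm_x)^{-1}$: for $\tau^+$ it is null, and for $\tau^-$ one has $\partial_x(\tau^-_x)^{-1}[(t_i),u]=\sum_i t_i\,e_i'(x)[u]$; differentiating $e_i=\xi_i/\|\xi_i\|_X$ and using $\|\xi_i(x)\|_X\geq\delta$ together with $\|\xi_i'(x)[u]\|_X\leq\delta'\|u\|_X$ from (vi) gives $\|e_i'(x)[u]\|_X\leq 2\delta'\|u\|_X/\delta$, the sought bound (note the norm $\|\xi_i\|_X$ cancels, so no upper bound on it is needed).

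With all hypotheses of Theorem~\ref{teo:main_intro} checked on $\Neh\cap\{J\leq c+1\}$, its proof shows that the map $G$ used in the proof of Theorem~\ref{teo:main_intro} satisfies the uniform bounds \eqref{eq:unif_inject}--\eqref{eq:unif_bound} there and that $\Neh$ is a natural constraint. I would then conclude through Corollary~\ref{coro:abst_neh}, whose remaining assumptions are exactly (i) and (ii). The point I expect to need the most care is that (iii)--(vi) are imposed only on the sublevel set $\{J\leq c+1\}$ and not on all of $\Neh$; this is precisely the situation covered by Remark~\ref{rem:only_min_sequences}. Indeed, Ekeland's principle produces a minimizing sequence $\{x_n\}\subset\Neh$ with $J(x_n)\to c$ and $J'(x_n)-\lambda_n\circ G'(x_n)\to 0$; for $n$ large one has $J(x_n)\leq c+1$, so \eqref{eq:unif_inject}--\eqref{eq:unif_bound} apply along the tail and Proposition~\ref{prop:abstract_nehari_palais_smale} forces $J'(x_n)\to 0$, after which the PS-condition at level $c$ yields $x_0\in\Neh$ with $J(x_0)=c$ and $J'(x_0)=0$.
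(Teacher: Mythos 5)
Your proof is correct and takes essentially the same route as the paper's: the same trivializations $\tau^+_x=\mathrm{id}$ and $\tau^-_x v=\left(\langle v,\xi_i(x)\rangle_X/\|\xi_i(x)\|_X\right)_{i=1,\dots,h}$, the same reduction of (inv) to (iv) and of the uniform bound on $\partial_x(\tau^-_x)^{-1}$ to (iii) and (vi), and the same appeal to Remark \ref{rem:only_min_sequences}, Theorem \ref{teo:main_intro} and Corollary \ref{coro:abst_neh} to handle the restriction of the hypotheses to the sublevel $\{J\leq c+1\}$. The only differences are cosmetic ones in your favor: you spell out the $C^1$ splitting of a general section of $V$ and the $V^+$-part of (inv), which the paper leaves implicit.
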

\begin{proof}
First of all, by virtue of Remark \ref{rem:only_min_sequences}, we can work
in the sublevel of $J$. We choose
\[
\Vb^+ := V^+, \qquad \Vb^-:=\R^h
\]
(which have trivial intersection by (v)), and $\tau_x^+$ to be the identity, $\tau_x^-:V_x^-\to\R^h$ defined as
\[
\tau_x^- : \xi \mapsto \left(\frac{\langle\xi ,\xi_1(x)\rangle_X }{\|\xi_1(x)\|_X},\ldots,
\frac{\langle\xi ,\xi_h(x)\rangle_X }{\|\xi_h(x)\|_X}\right).
\]
In particular, assumption (iii) immediately implies that $\tau_x^-$ is an isometric
isomorphism. Taking into account Theorem \ref{teo:main_intro} and Corollary \ref{coro:abst_neh},
the only non-trivial things to check are that assumption (inv) holds and that $\partial_x(\tau^-_x)^{-1}$ is
uniformly bounded as a bilinear map on $\Vb^-\times X$. On one hand, if
$\xi(x)=\sum_i t_i(x) \xi_i(x)$, then for any $v\in V_x$ it holds
\[
\xi'(x)[v] = \sum_{i=1}^h
t'_i(x)[v]\,\xi_i(x) + \sum_{i=1}^h t_i(x) \xi_i'(x)[v],
\]
where the first term belongs to $V_x^-$, while the second one is an element of $V_x$ by assumption
(iv). On the other hand, if $t\in \R^h$ and $u\in X$, then
\[
\partial_x(\tau^-_x)^{-1}: (t,u)\mapsto \sum_{i=1}^h t_i\left(\frac{\xi'_i(x)[u]}{\|\xi_i(x)\|}
-\frac{\langle \xi_i(x),\xi'_i(x)[u]\rangle \xi_i(x)}{\|\xi_i(x)\|^3}
\right),
\]
which is uniformly bounded by assumptions (iii) and (vi).
\end{proof}

\section{Superlinear elliptic systems}

In this section we apply Theorem \ref{teo:applications_specialized} in order to obtain multiple
positive solutions for the system
\begin{equation}\label{eq:system}
-\Delta u_i=\partial_iF(u_1,\ldots,u_k), \qquad i=1,\ldots,k,
\end{equation}
where every $u_i$ is $H^1_0$ on a bounded regular domain $\Omega\subset\R^N$.
We stress that, with ``positive solutions'', we mean that
\emph{every} component $u_i$ must be non negative and non identically zero.
We denote by $e_1,\dots,e_k$ the canonical base of $\R^k$, so that
\[
u=(u_1,\dots,u_k)=\sum_i u_ie_i.
\]
Throughout this section we will assume that $F\in C^2(\R^k,\R)$ and that there exist $p\in(2,2^*)$,
$C_F>0$  and $\delta>0$ such that, for every $u,\lambda \in\R^k$, it holds
\begin{itemize}
\item[(F1)] $\sum_{i,j} |\partial^2_{ij}F(u)|\leq C_F |u|^{p-2}$,
$\sum_{i}|\partial_iF(u)|\leq C_F|u|^{p-1}$ and $|F(u)|\leq C_F|u|^{p}$;
\item[(F2)] $\sum_{i,j}\partial^2_{ij}F(u)\lambda_i u_i\lambda_j u_j-
(1+\delta)\sum_i\partial_iF(u)\lambda_i^2u_i\geq 0$;
\item[(F3)] $\partial_iF(u)u_i\leq \partial_iF(u_i e_i)u_i$ for every $i$;
\item[(F4)] for every $i$ there exists $\bar u_i>0$ such that $\partial_i F(\bar u_i e_i) > 0$ .
\end{itemize}
Assumptions (F1),(F2)  and (F4) are quite standard when searching for solutions
of elliptic problems with variational methods, even in the case of one single equation.
As it concerns (F3), it can be slightly weakened (see the proof of Theorem \ref{teo:intro_appl}
at the end of the paper), and completely neglected
in case one admits solutions with some vanishing components (see Remark \ref{rem:vanishing_comp}).
Under these assumptions one can easily obtain some further inequalities, such as the
classical Ambrosetti-Rabinowitz condition
\begin{equation}\label{eq:ambrosetti_rabinowitz}
\nabla F(u)\cdot u-(2+\delta)F(u)\geq 0
\end{equation}
(notice that, by (F2), the function $t\mapsto \nabla F(tu)\cdot tu -(2+\delta)F(tu)$ is nondecreasing for
$t\in (0,1)$) and
\begin{equation}\label{eq:rapporto_monotono}
\partial_i F(u_ie_i)u_i \geq \frac{\partial_iF(\bar u_i e_i)\bar u_i}{\bar u_i^{2+\delta}}
u_i^{2+\delta}\quad
\text{for }u_i\geq \bar u_i
\end{equation}
(again by (F2), the function $t\mapsto \partial_iF(te_i)t/t^{2+\delta}$ is nondecreasing for $t> 0$).
Notice that the solutions of \eqref{eq:system} can be seen as critical points of the energy functional
\[
J(u)=\frac{1}{2}\int_{\Omega}|\nabla u|^2\,dx -\int_{\Omega}F(u)\,dx.
\]
It is standard to prove that $J\in C^2(X,\R)$ where $X:=H^1_0(\Omega,\R^k)$
is endowed with the norm $\|u\|^2=\int_{\Omega}|\nabla u|^2\,dx=\sum_i\int_{\Omega}
|\nabla u_i|^2\,dx$. Since we search for
positive solutions, we assume without loss of generality that $F$ is even with respect
to each component. All solutions will be found as minimizers of $J$ on suitable
even constraints. By standard arguments we obtain that, for any minimizer $(u_1,\dots,u_k)$
with $u_i\neq0$, then also $(|u_1|,\dots,|u_k|)$ is a minimizer, which components are
strictly positive by the strong maximum principle. For this reason, with a slight abuse,
from now on we will work only with $k$-tuples having non-negative components.

\subsection{Ground states}

We start investigating the existence of ground state solutions.
Such a problem has already been successfully faced in \cite{ctv2003JFA,dancerWeiWeth2010}, nonetheless we prefer to prove the result as a direct application of Theorem \ref{teo:applications_specialized}. This will be useful in the following, where we turn to the
analysis of excited states.
\begin{teo}\label{teo:application_ground_states}
Let $F\in C^2(\R^k,\R)$ satisfy (F1)-(F4). Then there exists a positive solution of \eqref{eq:system}
in $H^1_0(\Omega)$.
\end{teo}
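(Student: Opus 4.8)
The plan is to realize ground states of \eqref{eq:system} as minimizers of $J$ over a generalized Nehari manifold furnished by Theorem \ref{teo:applications_specialized}, taking a \emph{trivial} positive part. Concretely, I would set $V^+=\{0\}$, work on the open set $A=\{u\in X:\ \|u_i\|\neq0\text{ for all }i\}$, and choose $\xi_i(u)=u_ie_i$, so that $V_u=V_u^-=\spann\{u_1e_1,\dots,u_ke_k\}$ is $k$-dimensional, hence proper. With this choice $\Neh$ is exactly the set of $u$ with nonvanishing components for which each component lies on its own Nehari manifold, i.e. $\|u_i\|^2=\int_\Om\partial_iF(u)u_i$ for every $i$; since $F$ is even, I restrict throughout to nonnegative components. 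Checking the easy hypotheses is then routine: the orthogonality in (iii) is immediate since $u_ie_i$ and $u_je_j$ occupy different components, while the lower bound $\|u_i\|\geq\delta$ follows on $\Neh$ from $\|u_i\|^2=\int_\Om\partial_iF(u)u_i\le\int_\Om\partial_iF(u_ie_i)u_i\le C_F\int_\Om|u_i|^p$ (using (F3) then (F1)) together with the subcritical Sobolev embedding, as $p>2$. Assumption (iv) holds because $\xi_i'(u)[v]=v_ie_i$ and for $v\in V_u$ the component $v_i$ is a scalar multiple of $u_i$, so $v_ie_i\in V_u$. The bounds on $\xi_i'$ in (vi) are trivial, and those on $J'$, $J''$ follow from (F1) and H\"older/Sobolev once $\|u\|$ is controlled on the sublevel $\{J\le c+1\}\cap\Neh$; that control comes from \eqref{eq:ambrosetti_rabinowitz}, since on $\Neh$ one has $\|u\|^2=\int_\Om\nabla F(u)\cdot u$ and hence $J(u)\ge\frac{\delta}{2(2+\delta)}\|u\|^2$.

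The decisive point is the coercivity assumption (v). Because $V^+=\{0\}$, the coercive requirement is vacuous, and I only need anticoercivity of $J''$ along $V_u^-$. Writing $v=\sum_i t_iu_ie_i$, so that $v_i=t_iu_i$, I would use the Nehari identities to get $\|v\|^2=\sum_i t_i^2\int_\Om\partial_iF(u)u_i$, and then apply (F2) pointwise with $\lambda=t$ and integrate to obtain $\int_\Om\sum_{i,j}\partial^2_{ij}F(u)v_iv_j\ge(1+\delta)\|v\|^2$. Subtracting yields
\[
J''(u)[v,v]=\|v\|^2-\int_\Om\sum_{i,j}\partial^2_{ij}F(u)v_iv_j\le\|v\|^2-(1+\delta)\|v\|^2=-\delta\|v\|^2,
\]
which is precisely (v) with the minus sign. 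This one computation is the structural heart of the argument and the place where (F2) is genuinely used.

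It remains to verify (i) and (ii). The estimate $J\ge\frac{\delta}{2(2+\delta)}\|u\|^2$ on $\Neh$ shows $c\ge0$ is finite, provided $\Neh\neq\emptyset$; to produce a point I would take test functions $w_1,\dots,w_k$ with mutually disjoint supports (possible since $\Om$ is open), which decouples the fibering map $t\mapsto J(t_1w_1,\dots,t_kw_k)$ into a sum of single-variable functions, each of which, by (F4) and the superlinear growth encoded in \eqref{eq:rapporto_monotono}--\eqref{eq:ambrosetti_rabinowitz}, attains an interior maximum at some $t_i>0$; the resulting $k$-tuple lies on $\Neh$. The requirement $\inf_{\overline{\Neh}\setminus\Neh}J>c$ then holds vacuously: the uniform bound $\|u_i\|\geq\delta$ together with continuity of $u\mapsto\|u_i\|^2-\int_\Om\partial_iF(u)u_i$ shows $\Neh$ is closed, so $\overline{\Neh}\setminus\Neh=\emptyset$. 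For (ii), boundedness of Palais--Smale sequences again follows from \eqref{eq:ambrosetti_rabinowitz}, and the compact embedding $H^1_0\hookrightarrow L^p$ (valid since $p<2^*$) upgrades weak to strong convergence in the standard way. Theorem \ref{teo:applications_specialized} then delivers $u_0\in\Neh$ with $J(u_0)=c$ and $J'(u_0)=0$; each component satisfies $\|u_{0,i}\|\geq\delta>0$, and by the evenness reduction and the strong maximum principle $u_0$ is a genuinely positive solution.

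I expect the main obstacle to be hypothesis (i) rather than the abstract machinery: showing $\Neh\neq\emptyset$ requires handling the $k$-dimensional fibering, which I would defuse by the disjoint-support trick so that it decouples into $k$ classical single-equation Nehari arguments, while closedness of $\Neh$ rests on the uniform lower bound $\|u_i\|\geq\delta$. Everything else is either the short computation (v) above or standard superlinear estimates.
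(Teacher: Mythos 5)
Your proposal is correct and follows essentially the same route as the paper: the same choice $V^+=\{0\}$, $\xi_i(u)=u_ie_i$ on $A=\{u\in X:\,u_i\not\equiv0\text{ for every }i\}$, the same (F2)-based computation giving $J''(u)[v,v]\leq-\delta\|v\|^2$ on $V_u^-$, the same disjoint-support decoupling to show $\Neh\neq\emptyset$, and the same closedness argument via the uniform lower bound $\|u_i\|\geq\delta$ (the paper's Lemma \ref{lem:livello>norma}). The only cosmetic difference is that you find the point of $\Neh$ as an interior maximum of the decoupled fibering maps, while the paper exhibits a sign change of the functions $g_i(\lambda_i)$; these are equivalent.
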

Before proving this result, we state in the following lemma some preliminary
estimates which will be useful also in the next subsections.
\begin{lemma}\label{lem:livello>norma}
Let $u\in X$ be such that $J'(u)[u_ie_i]=0$ for every $i=1,\dots,k$. Then
\[
J(u)\geq \frac{\delta}{4+2\delta}\|u\|^2.
\]
Moreover, denoting by $C_S(\Omega,p)$ the Sobolev constant of the
embedding $H^1_0(\Omega)\subset L^p(\Omega)$,
\[
\text{either }\|u_i\|\geq (C_F C_S(\Omega,p)^p)^{-1/(p-2)}\text{ or }u_i\equiv0.
\]
\end{lemma}
\begin{proof}
Recalling the definition of $J$, the assumption writes
\[
\int_{\Omega} |\nabla u_i|^2\,dx=\int_{\Omega}\partial_i F(u)u_i\,dx,
\]
for every $i$. As it regards the first part, using equation
\eqref{eq:ambrosetti_rabinowitz} we have that
\begin{equation*}
J(u)\geq \frac{1}{2}\int_{\Omega}|\nabla u|^2\,dx-\frac{1}{2+\delta}\int_{\Omega} \nabla F(u)\cdot u \,dx=\frac{\delta}{4+2\delta}\|u\|^2.
\end{equation*}
On the other hand, assumptions (F3) and (F1) give
\[
\begin{split}
\int_{\Omega}|\nabla u_i|^2\,dx&=\int_{\Omega} \partial_i F(u)u_i\,dx \leq
\int_{\Omega} \partial_i F(u_ie_i)u_i \,dx \\
&\leq C_F\int_{\Omega}|u_i|^p\,dx \leq
C_F C_S(\Omega,p)^p\left(\int_{\Omega}|\nabla u_i|^2\,dx\right)^{p/2}.\qedhere
\end{split}
\]
\end{proof}
\begin{proof}[Proof of Theorem \ref{teo:application_ground_states}]
We define
\[
A=\{ u\in X:\ u_i\not\equiv 0 \ \text{ for every } i
\}
\]
and
\[
V^+=\left\{0\right\},\qquad\xi_i(u)=u_ie_i,\, i=1,\ldots,k.
\]
Within this setting we have
\begin{equation*}%\label{eq:nehari_bound_states}
\Neh=\left\{ u\in A:\ \int_{\Omega} |\nabla u_i|^2\,dx=\int_{\Omega}\partial_i F(u)u_i\,dx,
\quad i=1,\ldots,k \right\},
\end{equation*}
so that Lemma \ref{lem:livello>norma} holds true for any of its elements. Let us check the assumptions of Theorem \ref{teo:applications_specialized}.

(i) The first part of Lemma \ref{lem:livello>norma} shows that $c\geq 0$, while the second
part implies that $\overline{\Neh}\setminus\Neh = \emptyset$, thus the only thing to prove is that
$c<+\infty$, that is $\Neh\neq\emptyset$. To this aim let $u\in X$ be fixed in such a way
that $u_i\geq0$,  $u_i\not \equiv 0$, $u_i\cdot u_j\equiv0$ for $i\neq j$. We claim that there
exists $\lambda\in\R^k$, with all positive components, such that
$(\lambda_1 u_1,\ldots,\lambda_k u_k) \in\Neh$.
For each $i$ let us define the smooth function
\[
g_i(\lambda_i) := \lambda_i^2 \int_{\Omega}|\nabla u_i|^2\,dx- \int_{\Omega} \partial_i F(\lambda_i u_i e_i)\lambda_i u_i \,dx,
\]
so that the claim is equivalent to the existence of $\lambda$ such that $g_i(\lambda_i)=0$ for every
$i$.
On one hand, by assumption (F1) we have
\[
g_i(\lambda_i) \geq \lambda_i^2\int_{\Omega}|\nabla u_i|^2\,dx- \lambda_i^p C_F \int_{\Omega} u_i^p
\,dx,
\]
which is positive for $\lambda_i$ small. On the other hand, \eqref{eq:rapporto_monotono} implies
\[
\begin{split}
g_i(\lambda_i) &\leq
%\lambda_i^2\int_{\Omega}|\nabla u_i|^2\,dx-
%\int_{\{\lambda_iu_i<\bar u_i\}} \partial_i F(\lambda_i u_i e_i)\lambda_i u_i \,dx -
%\int_{\{\lambda_iu_i\geq\bar u_i\}} \partial_i F(\lambda_i u_i e_i)\lambda_i u_i \,dx\\
\lambda_i^2\int_{\Omega}|\nabla u_i|^2\,dx + C -
\int_{\{\lambda_iu_i\geq\bar u_i\}} \partial_i F(\lambda_i u_i e_i)\lambda_i u_i \,dx \\
& \leq \lambda_i^2\int_{\Omega}|\nabla u_i|^2\,dx + C -
\int_{\{\lambda_iu_i\geq\bar u_i\}} \frac{\partial_i F(\bar u_i e_i)\bar u_i}{\bar u_i^{2+\delta}}
(\lambda_i u_i)^{2+\delta} \,dx,
\end{split}
\]
which, by (F4), is negative for $\lambda_i$ sufficiently large.

(ii) It is a standard consequence of equation \eqref{eq:ambrosetti_rabinowitz} (see for example \cite{struwe08book}).

(iii) On one hand it is trivial to check that the $\xi_i$'s are orthogonal, on the other hand
Lemma \ref{lem:livello>norma} implies that $\|\xi_i(u)\|\geq \delta >0$.

(iv) Given $u\in\Neh$, any vector belonging to $V_u$ has the form $v=(\lambda_1 u_1,\ldots,\lambda_k u_k)$ for some $\lambda\in\R^k$.
Hence $\xi_i'(u)[v]=\lambda_i u_i e_i \ \in V_u$.

(v)  Let $u\in\Neh$ and $v=(\lambda_1 u_1,\ldots,\lambda_k u_k)\in V_u$. Assumption (F2) and the
definition of $\Neh$ provide
\[
J''(u)[v,v]\leq \sum_i \int_{\Omega}\lambda_i^2 |\nabla u_i|^2\,dx -(1+\delta)\sum_i \int_{\Omega} \partial_i F(u)\lambda_i^2 u_i\,dx
=-\delta \|v\|^2.
\]

(vi) Using assumption (F1), H\"older inequality and Sobolev embedding we have,
for every $v,w\in X$, and $u\in\Neh$,
\[
\begin{split}
\|\xi'_i(u)[v]\| &= \|v_i\| \leq \|v\|,\\
|J'(u)[v]| &\leq \int_{\Omega}|\nabla u||\nabla v|\,dx +C_F\int_{\Omega}|u|^{p-1}|v|\,dx \leq
\left(\|u\|+C\|u\|^{p-1}\right)\|v\|,\\
|J''(u)[v,w]| &\leq \int_{\Omega}|\nabla v||\nabla w|\,dx +C_F\int_{\Omega}|u|^{p-2}|v||w|\,dx \leq
\left(1+C\|u\|^{p-2}\right)\|v\|\|w\|.
\end{split}
\]
We can easily conclude observing that, by Lemma \ref{lem:livello>norma}, $\|u\|$ is uniformly bounded on
$\Neh\cap\{J\leq c+1\}$.
\end{proof}
\begin{coro}\label{coro:gs}
For every $I\subset\left\{1,\dots,k\right\}$, $\tilde\Omega\subset\R^N$ smooth and bounded domain,
there exists a (minimal energy) solution of \eqref{eq:system} in $H^1_0(\tilde\Omega)$ such that
$u_i>0$ for $i\in I$, $u_i\equiv0$ otherwise.
\end{coro}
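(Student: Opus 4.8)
The plan is to reduce the problem to a ground-state problem for the subsystem indexed by $I$, apply Theorem \ref{teo:application_ground_states} to that subsystem, and then extend the resulting solution by zero. Concretely, write $m:=|I|$ and let $\iota:\R^m\to\R^k$ be the linear embedding that places the $m$ coordinates in the slots indexed by $I$ and fills the remaining slots with zeros. Define $\tilde F:\R^m\to\R$ by $\tilde F(v):=F(\iota(v))$. For $i\in I$ one has $\partial_i\tilde F(v)=\partial_iF(\iota(v))$ and $\partial^2_{ij}\tilde F(v)=\partial^2_{ij}F(\iota(v))$, and since $|\iota(v)|=|v|$, each of the assumptions (F1)--(F4) for $\tilde F$ should follow by restricting the corresponding assumption for $F$ to vectors and multipliers supported on $I$.

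First I would verify (F1)--(F4) for $\tilde F$. (F1) is immediate from $|\iota(v)|=|v|$ together with the formulas for the derivatives of $\tilde F$. For (F2) one applies the hypothesis for $F$ at the argument $u=\iota(v)$ with a multiplier $\lambda$ supported on $I$, observing that the sums over $i,j\in I$ coincide with the full sums once the extra coordinates vanish; similarly, (F3) for $\tilde F$ follows from (F3) for $F$ at $u=\iota(v)$, using $\iota(v)_i=v_i$ and $\iota(v)_ie_i=v_ie_i$ for $i\in I$. Finally (F4) for $\tilde F$ is exactly (F4) for $F$ read off the indices $i\in I$.

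With $\tilde F$ in hand, Theorem \ref{teo:application_ground_states}, applied on the domain $\tilde\Omega$ with $m$ components, yields a minimal-energy solution $\tilde u=(\tilde u_i)_{i\in I}\in H^1_0(\tilde\Omega,\R^m)$ of $-\Delta\tilde u_i=\partial_i\tilde F(\tilde u)$ with $\tilde u_i>0$ for every $i\in I$. I would then define $u:=\iota(\tilde u)\in H^1_0(\tilde\Omega,\R^k)$, so that $u_i=\tilde u_i>0$ for $i\in I$ and $u_i\equiv0$ otherwise. For $i\in I$ the equation $-\Delta u_i=\partial_i\tilde F(\tilde u)=\partial_iF(u)$ holds by construction.

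The only point that needs a genuine argument is the validity of the equations for the vanishing components, and this is exactly where I expect the one mild obstacle to lie. For $i\notin I$ we have $u_i\equiv0$, hence $-\Delta u_i=0$, and we must check $\partial_iF(u)=0$. This follows from the evenness of $F$: since $F$ is even in its $i$-th variable, $\partial_iF$ is odd in that variable, and therefore $\partial_iF$ vanishes whenever its $i$-th argument is zero; in particular $\partial_iF(u)=0$ because $u_i\equiv0$. Thus $u$ solves the full system \eqref{eq:system}, is positive exactly on the components indexed by $I$, and inherits its minimal-energy character from $\tilde u$. Every remaining step is a routine transcription of the hypotheses to the subsystem.
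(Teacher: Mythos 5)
Your proposal is correct and follows essentially the same route as the paper: the paper's (one-line) proof consists precisely of observing that the restricted potential $\tilde F(\tilde u_1,\dots,\tilde u_{\tilde k})=F\bigl(\sum_{i\in I}\tilde u_i e_{\sigma(i)}\bigr)$ satisfies (F1)--(F4), so that Theorem \ref{teo:application_ground_states} applies to the subsystem and the solution is extended by zero. You additionally spell out the step the paper leaves implicit --- that $\partial_i F(u)=0$ on the vanishing components, via the standing evenness assumption on $F$ --- which is a correct and welcome completion of the argument.
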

\begin{proof}
It suffices to observe that, letting $\tilde k = \# I$ and $\sigma:\left\{1,\dots,\tilde k\right\}\to I$
increasing, then
\[
\tilde F (\tilde u_1, \dots, \tilde u_{\tilde k})= F\left(\sum_{i\in I} \tilde u_i e_{\sigma(i)}\right)
\]
satisfies (F1)-(F4) on $\R^{\tilde k}$.
\end{proof}
\begin{rem}\label{rem:vanishing_comp}
Neglecting assumption (F3), it is possible to use the standard Nehari manifold
in order find nontrivial solutions with possibly vanishing components. In the setting above,
this corresponds to replacing $\spann\{u_1e_1,\dots,u_ke_k\}$ with $\spann\{u\}$. Indeed,
assumption (F3) is used only in the second part of Lemma \ref{lem:livello>norma}, which argument
can be directly applied to $\int_\Omega|\nabla u|^2\,dx$. The same idea can be carried on
also in the results below.
\end{rem}

\subsection{Multi-bump solutions}

We will prove multiplicity of positive solutions for system \eqref{eq:system} when $\Omega$
is close to the union of disjoint subdomains. More precisely we introduce the following
notations and assumptions.
\begin{itemize}
 \item[($\Omega$1)] $\Omega$ and $\Omega_l$, $l=1,\dots,n$, are bounded regular domains and $D$
 is a bounded regular open set, such that
\[
\overline{\Omega}_l\cap \overline{\Omega}_m =\emptyset\text{ for every }
l\neq m,\qquad \Omega \setminus \overline{D} = \bigcup_{l=1}^n \Omega_l,
\]
 \item[($\Omega$2)] $B\supset \Omega$ is a fixed ball, $\Gamma_l \subsetneq \partial \Omega_l$, $l=1,\dots,n$,
 are (non-empty and) relatively open, such that
 \[
 \partial D \cap\Gamma_l=\emptyset
 \]
 \item[($\Omega$3)] $\eta_l\in C^\infty(\R^N)$, $l=1,\dots,n$, are such that $0\leq \eta_l \leq 1$,
$\eta_l|_{\Omega_l}=1$, and $\eta_l\cdot\eta_m\equiv 0$ for $l\neq m$. $C_\eta>0$ denotes a constant
(depending only on $D$, $\eta_1,\dots,\eta_n$) with the property that
\begin{equation}
\int_{\Omega}|\nabla \eta_l|^2  \vphi^2\,dx \leq C_\eta \int_{\Omega}|\nabla \vphi|^2\,dx
\quad\text{for every }\vphi\in H^1_0(\Omega)
\end{equation}
(observe that the first integral is actually on $D$).
\end{itemize}
In our construction, we assume $\Omega_l$, $\Gamma_l$ and $B$ to be fixed, while $D$,
and hence $\Omega$ and $\eta_l$, to vary. From this point of view, since $H^1_0(\Omega)
\subset H^1_0(B)$, the role of $B$ is only to provide Sobolev constants not depending
on $D$, neither on $\Omega$.
As we mentioned, we consider the case in which
$D$ is suitably small, meaning that both the Lebesgue measure $|D|$ and the constant $C_\eta$
above are small. This last property is related to the smallness
of the $N$-capacity of suitable subsets of $D$, and it can be shown to hold, for instance,
if $D$ can be decomposed in a finite number of parts, each of which
lies between two hyperplanes sufficiently close.

We are going to distinguish different solutions of \eqref{eq:system} by prescribing the ``size''
of $u_i|_{\Omega_l}$, for every $i=1,\dots,k$ and $l=1,\dots,n$. More precisely let us fix any
\begin{equation}\label{eq:L_i}
L_i\subset\{1,\dots,n\},\,L_i\neq\emptyset,\qquad i=1,\dots,k.
\end{equation}
We will provide a solution such that $u_i|_{\Omega_l}$ is ``large'' for $l\in L_i$ and ``small''
for $l\not\in L_i$. Due to the arbitrary choice of the sets $L_i$'s, this will imply
the existence of $(2^n-1)^k$ different positive solutions of system \eqref{eq:system}.
The size of each bump will be classified in relation to the constants
\[
r_l:=\left(\frac{p}{2}C_F C_S(\Omega_l,\Gamma_l,p)^p\right)^{-1/(p-2)},
\]
where $C_S(\Omega_l,\Gamma_l,p)$ is the Sobolev constant of the embedding
$H^1_{0,\Gamma}(\Omega_l)\subset L^p(\Omega_l)$ (compare with the constant which appears in
Lemma \ref{lem:livello>norma}). Let us remark that $r_l$ is independent of $D$. We can finally
state the main result of this section.
\begin{teo}\label{teo:application_single_bump}
Let $F\in C^2(\R^k,\R)$ satisfy (F1)-(F4) and let $\Omega\subset\R^N$ satisfy ($\Omega$1)-($\Omega$3).
Assume that the quantities
\[
|D|,\,C_\eta%,\, C_S(D,2^*)
\text{ are sufficiently small.}
\]
Then for any $L_1,\dots,L_k$ as in \eqref{eq:L_i} there exists a positive solution $u$ of
\eqref{eq:system} such that, for every $i$ and $l$,
\[
\int_{\Omega_l}|\nabla u_i|^2\,dx> r_l^2\text{ for }l\in L_i, \qquad
\int_{\Omega_l}|\nabla u_i|^2\,dx< r_l^2\text{ for }l\not\in L_i.
\]
\end{teo}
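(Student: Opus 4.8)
The plan is to construct, for each prescribed choice of the sets $L_1,\dots,L_k$, an appropriate generalized Nehari constraint $\Neh$ and apply Theorem \ref{teo:applications_specialized} to it. The natural idea is to split each component $u_i$ according to the bumps: I would work in $X = H^1_0(\Omega,\R^k)$ and, using the cutoff functions $\eta_l$, write $u_i = \sum_l \eta_l u_i + (\text{remainder on } D)$. For the bumps that should be ``large'' ($l\in L_i$) I would impose a Nehari-type condition, while for the bumps that should be ``small'' ($l\notin L_i$) I would impose an orthogonality-type condition that forces $J''$ to be \emph{coercive} rather than anticoercive along those directions. Concretely, the vectors $\xi$ spanning $V^-_u$ should be the localized pieces $\eta_l u_i e_i$ for $(i,l)$ with $l\in L_i$ (giving the anticoercive directions, as in Lemma \ref{lem:livello>norma} and assumption (v)), whereas $V^+$ collects the complementary small-bump directions on which $J''$ is coercive. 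This is exactly the new flexibility that Theorem \ref{teo:main_intro} provides: $J''$ need not be definite on all of $V_x$.

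\textbf{Next I would} verify the six hypotheses (i)--(vi) of Theorem \ref{teo:applications_specialized} for this constraint, exploiting the smallness of $|D|$ and $C_\eta$. The coercivity/anticoercivity estimate (v) should follow from (F2) on the large bumps (mimicking the computation in the proof of Theorem \ref{teo:application_ground_states}, case (v)) and from a Sobolev/superlinearity argument on the small bumps, where the point is that if $\int_{\Omega_l}|\nabla u_i|^2 < r_l^2$ then the nonlinear term is subdominant and $J''$ stays positive definite; the constants $r_l$ are defined precisely so that this threshold is the sharp one via $C_S(\Omega_l,\Gamma_l,p)$. The orthogonality and lower-bound conditions (iii) and the invariance (iv) are structural and should reduce to computations like those in the ground-state proof, since each $\xi'$ acts diagonally on the localized pieces. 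Condition (vi) is again a routine boundedness check via (F1), H\"older and Sobolev, once one knows $\|u\|$ is controlled on the relevant sublevel.

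\textbf{The hard part will be} the level/topology condition (i), namely showing $c\in\R$ and, crucially, that $\inf_{\overline{\Neh}\setminus\Neh} J > c$, so that minimizing sequences stay in the ``right'' stratum and cannot leak to the boundary where some prescribed-large bump degenerates to small (or vice versa). This is where the smallness of $|D|$ and $C_\eta$ must be used quantitatively: the energy contribution concentrated on $D$ must be shown to be negligible compared to the energy gap between configurations with a bump above versus below the threshold $r_l^2$. I expect this to require a careful decomposition of $J(u)$ into contributions from the disjoint subdomains $\Omega_l$ plus an error term on $D$ controlled by $C_\eta$ (via assumption ($\Omega$3)), together with a comparison of the minimal energies of the two types of local ground-state problems (large-bump Nehari energy versus the cost of forcing a bump below its threshold). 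The strict separation of strata, and hence the conclusion that the minimizer genuinely satisfies the prescribed size inequalities $\int_{\Omega_l}|\nabla u_i|^2 \gtrless r_l^2$, is the technical heart of the argument; once it is in place, Theorem \ref{teo:applications_specialized} delivers a free critical point $u\in\Neh$, and since $\Neh$ was built to encode the prescribed sizes, $u$ is the desired solution. Running over all $\bigl(2^n-1\bigr)^k$ admissible choices of $(L_1,\dots,L_k)$ then yields the stated multiplicity.
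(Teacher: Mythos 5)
Your constraint is exactly the paper's: $\xi_{i,l}(u)=\eta_l u_ie_i$ for $l\in L_i$ spanning $V^-_u$, $V^+$ the subspace of $k$-tuples whose $i$-th component vanishes on $\bigcup_{l\in L_i}\overline{\Omega}_l$, and an open set $A$ encoding the prescribed bump sizes; your sketches of (ii)--(vi) also run along the paper's lines. The genuine gap is in condition (i), which you correctly single out as the hard part but propose to attack with the wrong tool. First, you never produce an element of $\Neh$: the paper glues the local ground states given by Corollary \ref{coro:gs} into a function $g$, which shows $c<+\infty$. Second, you omit the norm bound $\|u\|<R$ from the definition of $A$, where $R^2:=\max\{\|g\|^2,\tfrac{4+2\delta}{\delta}J(g)\}+1$. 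This bound is not cosmetic: every error term produced by $D$ in the subsequent estimates scales like $C_\eta R^2$ or $C_F|D|^{(2^*-p)/2^*}C_S(B,2^*)^pR^{p}$, so the smallness of $|D|$ and $C_\eta$ buys nothing without a uniform a priori bound on $\|u\|$ over $\Neh$.

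More seriously, your mechanism for $\inf_{\overline{\Neh}\setminus\Neh}J>c$ --- comparing the energy of configurations with a bump above versus below the threshold --- cannot work as stated: $c$ aggregates the energies of \emph{all} the prescribed-large bumps, so there is no reason why a boundary point with a single bump frozen at $\int_{\Omega_l}|\nabla u_i|^2\dx=r_l^2$ should have energy exceeding $c$; such points cannot be excluded on energy grounds. The paper's argument is structural rather than energetic. Since $\eta_l^2u_ie_i\in V_u$ (the invariance property \eqref{eq:inv}), every $u\in\Neh$ satisfies the localized Nehari identity \eqref{eq:nehari_perturbata}, and feeding it into (F1), (F3), H\"older and Sobolev yields the dichotomy of Lemma \ref{lem:lontano_da_zero_perturbato}: on $\Neh$ each bump satisfies either $\int_{\Omega_l}|\nabla u_i|^2\dx\geq(1+C)r_l^2$ or $\int_{\Omega_l}|\nabla u_i|^2\dx\leq\eps^2$, with $\eps$ arbitrarily small with $|D|$ and $C_\eta$. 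Consequently the size constraints are \emph{never active} on $\overline{\Neh}$, so $\overline{\Neh}\setminus\Neh\subset\{\|u\|=R\}$, and there Lemma \ref{lem:livello>norma} (applicable because \eqref{eq:nehari_vecchia} holds on $\Neh$) gives $J(u)\geq\tfrac{\delta}{4+2\delta}R^2>J(g)\geq c$ by the choice of $R$. This dichotomy lemma, driven by the identity \eqref{eq:nehari_perturbata}, is the missing idea; note that it also feeds your verification of (v) on $V^+$, since the paper's coercivity estimate there uses the improved smallness $\eps$ of the small bumps rather than the raw threshold $r_l$.
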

To start with, using the results of the previous subsection, it is easy to provide a $k$-tuple $g$
of non-negative functions in $H^1_0(\cup_l \Omega_l)$ such that
\begin{equation*}
-\Delta g_i=\partial_i F(g_1,\ldots,g_k), \qquad \text{and } g_i|_{\Omega_l}\text{ is either positive or zero,}
\end{equation*}
depending on whether $l \in L_i$ or not (in some sense, one can think of $g$ as the required solution, in the singular limit case $D=\emptyset$). Indeed, for any $l$ one can apply Corollary \ref{coro:gs} with $\tilde\Omega = \Omega_l$ and $I=\{i: l\in L_i\}$. Then $g$ is the sum of the corresponding solutions. By trivial extension, $g\in H^1_0(\Omega)$.

Let us define the constant (independent of $D$)
\[
R^2:=\max\left\{\|g\|^2, \frac{4+2\delta}{\delta} J(g)\right\}+1,
\]
where $\delta$ has been introduced in assumption (F2).
In order to apply Theorem \ref{teo:applications_specialized}
we define
\[
V^+ := \left\{v\in H^1_0(\Omega,\R^k):\,v_i\in H^1_0\left(\Omega\setminus\bigcup_{l\in L_i}\overline{\Omega}_l)\right)\right\}
\]
and
\[
\xi_{i,l}(u):=\eta_l u_ie_i, \quad i=1,\dots,k\text{ and }l\in L_i,
\]
the latter being smooth on
\[
A:=\left\{ u\in X:\, \|u\|<R,\begin{array}{l} \int_{\Omega_l}|\nabla u_i|^2\,dx>r_l^2\text{ if }l\in L_i,
\smallskip\\
\int_{\Omega_l}|\nabla u_i|^2\,dx<r_l^2\text{ if }l\not\in L_i
\end{array}
\right\}.
\]
On one hand we have that
\[
u_ie_i=\left(1-\sum_{l\in L_i}\eta_l\right)u_ie_i+\sum_{l\in L_i}\eta_l u_ie_i \in V_u,
\]
since the first term is in $V^+$ and the second one in $V^-_u$. This in particular
implies, for every $i$,
\begin{equation}\label{eq:nehari_vecchia}
u\in\Neh\quad\implies\quad J'(u)[u_ie_i]=0.
\end{equation}
Analogously, for every $i$ and $l$,
\begin{equation}\label{eq:inv}
\eta_l^2u_ie_i \in V_u,
%\text{ for every }l,
\end{equation}
indeed, either it belongs to $V^+$ if $l\not\in L_i$, or it is equal to
$(\eta_l^2-\eta_l)u_ie_i+\eta_l u_ie_i$, the former belonging to $V^+$
and the latter to $V^-_u$. We
deduce that, for every $u\in\Neh$ and $l=1,\dots,n$, it holds
\[
0=J'(u)[\eta_l^2u_ie_i]=\int_\Omega \nabla u_i \cdot \nabla(\eta_l^2 u_i)\,dx- \int_\Omega
\partial_i F(u)\eta_l^2u_i\,dx,
\]
which implies
\begin{equation}\label{eq:nehari_perturbata}
\int_{\Omega}|\nabla (\eta_l u_i)|^2\,dx =
\int_{\Omega} \partial_iF(u)\eta_l^2 u_i\,dx + \int_{\Omega}|\nabla \eta_l|^2  u_i^2\,dx.
\end{equation}
Using this property we can prove a result which can be seen as a perturbation
of the second part of Lemma \ref{lem:livello>norma}. Such result will allow
to better localize the bumps of the elements of $\Neh$.
\begin{lemma}\label{lem:lontano_da_zero_perturbato}
Let $|D|$, $C_\eta$ be sufficiently small. Then there exist positive
constants $C$, $\eps$ such that, for every $u\in\Neh$, it holds
\[
\begin{split}
 &\int_{\Omega_l}|\nabla u_i|^2\,dx > r_l^2 \quad\implies\quad \int_{\Omega_l}|\nabla u_i|^2\,dx
 \geq (1+C) r_l^2\\
 &\int_{\Omega_l}|\nabla u_i|^2\,dx < r_l^2 \quad\implies\quad \int_{\Omega_l}|\nabla u_i|^2\,dx
 \leq \eps^2,
\end{split}
\]
where $\eps$ can be made arbitrarily small with $|D|$, $C_\eta$.
\end{lemma}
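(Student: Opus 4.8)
The plan is to analyze the key equation \eqref{eq:nehari_perturbata} in order to establish a dichotomy for each bump $\int_{\Omega_l}|\nabla u_i|^2\,dx$, treating the terms involving $D$ and $\eta_l$ as small perturbations. The starting point is that for $u\in\Neh$ the restriction $u_i$ satisfies the perturbed Nehari-type identity
\[
\int_{\Omega}|\nabla (\eta_l u_i)|^2\,dx =
\int_{\Omega} \partial_iF(u)\eta_l^2 u_i\,dx + \int_{\Omega}|\nabla \eta_l|^2  u_i^2\,dx,
\]
in which the last term is supported in $D$ and controlled by $C_\eta$ via ($\Omega$3). The idea is that $\eta_l u_i$ essentially behaves like a function in $H^1_{0,\Gamma_l}(\Omega_l)$, so that the Sobolev constant $C_S(\Omega_l,\Gamma_l,p)$ entering the definition of $r_l$ is the relevant one, and deviations are governed by the smallness of $|D|$ and $C_\eta$.

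First I would estimate the right-hand side from above using assumptions (F3) and (F1), exactly as in the proof of Lemma \ref{lem:livello>norma}, to get
\[
\int_{\Omega} \partial_iF(u)\eta_l^2 u_i\,dx \leq C_F\int_{\Omega}|\eta_l u_i|^p\,dx
\leq C_F C_S(\Omega_l,\Gamma_l,p)^p\left(\int_{\Omega}|\nabla(\eta_l u_i)|^2\,dx\right)^{p/2},
\]
where I use that $\eta_l u_i$ vanishes on $\partial\Omega_l\setminus\Gamma_l$ (by ($\Omega$2), since $\partial D\cap\Gamma_l=\emptyset$) to invoke the embedding $H^1_{0,\Gamma_l}(\Omega_l)\subset L^p(\Omega_l)$. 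Combining this with the perturbation term bounded by $C_\eta\|u_i\|^2$ and setting $s:=\int_{\Omega}|\nabla(\eta_l u_i)|^2\,dx$, the identity yields an inequality of the form $s\leq C_F C_S^p s^{p/2}+ (\text{small})$. Since $p>2$, the function $s\mapsto s - C_F C_S^p s^{p/2}$ is positive for small $s$, vanishes at a threshold comparable to $r_l$, and becomes negative beyond it; the factor $p/2$ built into $r_l$ provides the quantitative gap. Thus, up to the perturbation, $s$ cannot lie in an interval around the critical value, which is precisely the source of the claimed dichotomy.

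The next step is to transfer this dichotomy from $s=\int_\Omega|\nabla(\eta_l u_i)|^2\,dx$ back to the genuine bump $\int_{\Omega_l}|\nabla u_i|^2\,dx$. Since $\eta_l\equiv1$ on $\Omega_l$, we have $\int_{\Omega_l}|\nabla u_i|^2\,dx\leq s$, while the difference $s-\int_{\Omega_l}|\nabla u_i|^2\,dx$ is again supported in $D$ and estimated by $|D|$ and $C_\eta$ together with the uniform bound $\|u\|<R$ on $A$. Hence the two quantities differ by a term that is small with $|D|,C_\eta$. Choosing these small enough, I would conclude that if $\int_{\Omega_l}|\nabla u_i|^2\,dx>r_l^2$ then $s$ lies above the threshold, forcing $s$ and therefore $\int_{\Omega_l}|\nabla u_i|^2\,dx$ to exceed $(1+C)r_l^2$ for a fixed $C>0$; whereas if $\int_{\Omega_l}|\nabla u_i|^2\,dx<r_l^2$ then $s$ lies below the threshold, forcing $s\leq\eps^2$ with $\eps\to0$ as $|D|,C_\eta\to0$.

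The main obstacle I anticipate is the careful bookkeeping of the perturbation terms so that all error bounds are genuinely uniform over $\Neh$ and vanish as $|D|,C_\eta\to 0$: in particular, controlling $\int_\Omega|\nabla\eta_l|^2 u_i^2\,dx$ and the commutator between $|\nabla(\eta_l u_i)|^2$ and $\eta_l^2|\nabla u_i|^2$ requires using ($\Omega$3) together with the a priori bound $\|u\|<R$, and one must ensure that the Sobolev constant employed is the $D$-independent constant $C_S(\Omega_l,\Gamma_l,p)$ rather than one depending on the varying domain $\Omega$. Getting the strict separation with a fixed multiplicative gap $(1+C)$ on one side and an arbitrarily small $\eps$ on the other hinges on exploiting the strict inequality $p>2$ and the explicit factor $p/2$ in the definition of $r_l$, which I would track quantitatively.
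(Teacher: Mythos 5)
Your overall strategy---start from \eqref{eq:nehari_perturbata}, bound its right-hand side to obtain a scalar inequality of the form $t^2 \le C_F C_S(\Omega_l,\Gamma_l,p)^p\, t^p + \eps'$, and read the dichotomy off the resulting polynomial, whose critical point sits exactly at $r_l$ thanks to the factor $p/2$---is the paper's. But you derive the dichotomy for the wrong quantity, and the transfer step you use to come back rests on a false claim. You establish the alternative for $s := \int_\Omega |\nabla(\eta_l u_i)|^2\,dx$ and then assert that $s - \int_{\Omega_l}|\nabla u_i|^2\,dx$ is small with $|D|$ and $C_\eta$. That difference equals $\int_D|\nabla(\eta_l u_i)|^2\,dx$ (since $\eta_l\equiv 0$ on $\Omega_m$ for $m\neq l$), and
\[
\int_D|\nabla(\eta_l u_i)|^2\,dx \le 2\int_D \eta_l^2|\nabla u_i|^2\,dx + 2\int_D|\nabla\eta_l|^2 u_i^2\,dx;
\]
the second term is $\le 2C_\eta R^2$, but the first is only bounded by $2R^2$: smallness of the \emph{measure} $|D|$ gives no control on the Dirichlet energy of $u_i$ inside $D$, which may concentrate there, and nothing in the definition of $\Neh$ or $A$ excludes this (this is precisely why ($\Omega$3) is imposed as a hypothesis for the zero-order term, and no analogous hypothesis could hold for $\int_D|\nabla u_i|^2$). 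Consequently neither implication of the lemma follows: if $\int_{\Omega_l}|\nabla u_i|^2\,dx>r_l^2$ you only get $s\ge(1+C)r_l^2$, which together with $\int_{\Omega_l}|\nabla u_i|^2\,dx\le s$ gives no lower bound beyond $r_l^2$; and if $\int_{\Omega_l}|\nabla u_i|^2\,dx<r_l^2$ you cannot place $s$ in the lower branch, since $s$ may exceed the threshold with all the excess sitting in $D$.

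The repair is the paper's reorganization: never let $s$ appear on the right-hand side. Split the potential term of \eqref{eq:nehari_perturbata} as $\int_{\Omega_l}\partial_iF(u)u_i\,dx + \int_D\partial_iF(u)\eta_l^2u_i\,dx$. On $\Omega_l$, apply (F3), (F1) and the embedding $H^1_{0,\Gamma_l}(\Omega_l)\subset L^p(\Omega_l)$ to $u_i$ itself (legitimate because $\Gamma_l\cap\partial D=\emptyset$ forces $\Gamma_l\subset\partial\Omega$, so the trace of $u_i$ vanishes on $\Gamma_l$), obtaining $C_F C_S(\Omega_l,\Gamma_l,p)^p\bigl(\int_{\Omega_l}|\nabla u_i|^2\,dx\bigr)^{p/2}$. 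On $D$, use (F1), H\"older with exponent $2^*/p$ and the Sobolev constant of the fixed ball $B$, obtaining $C_F|D|^{(2^*-p)/2^*}C_S(B,2^*)^pR^p$. Then \eqref{eq:nehari_perturbata} yields a closed inequality in $t^2=\int_{\Omega_l}|\nabla u_i|^2\,dx$ alone, and the conclusion follows from $h(t)=C_FC_S(\Omega_l,\Gamma_l,p)^pt^p-t^2+\eps'$, $h'(r_l)=0$, $h(r_l)<0$, with no transfer needed. Note also two smaller defects in your displayed chain which this splitting removes: (F3)+(F1) give $\partial_iF(u)\eta_l^2u_i\le C_F\eta_l^2|u_i|^p$, which \emph{dominates} (rather than is dominated by) $C_F|\eta_lu_i|^p=C_F\eta_l^p|u_i|^p$, since $p>2$ and $0\le\eta_l\le1$; and the constant $C_S(\Omega_l,\Gamma_l,p)$ cannot be applied to $\eta_lu_i$, whose support meets $D$, without incurring exactly the $D$-dependent error you were trying to avoid.
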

\begin{proof}
Using \eqref{eq:nehari_perturbata}, (F3) and (F1) we have
\[
\begin{split}
\int_{\Omega_l}|\nabla u_i|^2\,dx &\leq \int_{\Omega}|\nabla (\eta_l u_i)|^2\,dx =
\int_{\Omega} \partial_i F(u)\eta_l^2 u_i\,dx + \int_{\Omega}|\nabla \eta_l|^2  u_i^2\,dx\\
&= \int_{\Omega_l} \partial_i F(u)u_i\,dx +\int_{D} \partial_i F(u)\eta_l^2 u_i\,dx +\int_{\Omega}|\nabla \eta_l|^2  u_i^2\,dx\\
&\leq \int_{\Omega_l} \partial_i F(u_i e_i)u_i\,dx +C_F \int_{D} |u|^p\,dx + C_\eta R^2\\
&\leq C_F\int_{\Omega_l} |u_i|^p\,dx +C_F |D|^{(2^*-p)/2^*} C_S(B,2^*)^pR^{p} + C_\eta R^2\\
&\leq C_F C_S(\Omega_l,\Gamma_l,p)^p\left(\int_{\Omega_l} |\nabla u_i|^2\,dx \right)^{p/2} +
\eps',
\end{split}
\]
where $\eps'$ denotes a quantity arbitrarily small whenever $|D|$ and $C_\eta$ are.
The conclusion easily follows by observing that, denoting by
\[
h(t)=C_F C_S(\Omega_l,\Gamma_l,p)^p t^p - t^2 + \eps',
\]
it holds $h'(r_l)=0$ and $h(r_l)<0$ for $\eps'$ small.
\end{proof}
\begin{proof}[End of the proof of Theorem \ref{teo:application_single_bump}]
We check the assumptions of Theorem \ref{teo:applications_specialized}.

(i) To start with, we have that $c<+\infty$, since $g\in\Neh$. Secondly,
by equation \eqref{eq:nehari_vecchia}, we have that Lemma \ref{lem:livello>norma}
holds true also in the present case, thus providing $c\geq0$. Finally, let
$u\in \overline{\Neh}\setminus \Neh$: then, by Lemma \ref{lem:lontano_da_zero_perturbato}
necessarily $\|u\|=R$. But then, using again Lemma \ref{lem:livello>norma} and the definition of $R$ we obtain
\[
J(u)\geq \frac{\delta}{4+2\delta} R^2 > J( g)\geq c.
\]

(ii) The same as in the previous subsection.

(iii) By definition of $A$ we have that $\|\xi_{i,l}(u)\|> r_l$ for every $i$, $l\in L_i$.

(iv) It follows from \eqref{eq:inv}.

(v) If $u\in\Neh$ and $v\in V^+$ then $v_i\equiv 0$ on $\Omega_l$ for every $l\in L_i$, whereas for $l\not\in L_i$ it holds
$\int_{\Omega_l} |\nabla v_i|^2\,dx<\epsilon^2$ where $\eps$ is defined as in Lemma \ref{lem:lontano_da_zero_perturbato}. Hence we have
\begin{multline*}
J''(u)[v,v] =
  \int_{\Omega} |\nabla v|^2\,dx - \int_{D} \sum_{i,j} \partial^2_{ij} F(u)v_iv_j\,dx- \sum_{l\not\in L_i}\int_{\Omega_l}\sum_{i,j} \partial^2_{ij} F(u)v_iv_j\,dx\\
  \geq \left(1 - C_F |D|^{(2^*-p)/2^*} C_S(B,2^*)^pR^{p-2}
  - \sum_{l\not\in L_i} C_F C_S(\Omega_l,\Gamma_l,p)^p \eps^{p-2}\right) \|v\|^2.
\end{multline*}
On the other hand, if $v\in V^-_u$ then $v=\sum_i \left(\sum_{l\in L_i}t_{i,l} \eta_l \right) u_i$, for some $t_{i,l}\in\R$.
Using (F2) and \eqref{eq:nehari_perturbata} we obtain
\[
\begin{split}
J''(u)[v,v] &\leq
  \|v\|^2 - (1+\delta) \int_{\Omega}\sum_i \partial_i F(u) \sum_{l\in L_i} t_{i,l}^2 \eta^2_l u_i \,dx\\
  &= -\delta \|v\|^2 + (1+\delta)\int_\Omega \sum_i \sum_{l\in L_i} t_{i,l}^2 |\nabla\eta_l|^2 u_i^2\,dx \\
  &\leq -\delta \|v\|^2 + (1+\delta)C_\eta \int_\Omega \sum_i \sum_{l\in L_i} t_{i,l}^2 |\nabla u_i|^2\,dx\\
  &=  -\delta \|v\|^2 + (1+\delta)C_\eta \sum_i \sum_{l\in L_i} \frac{\int_\Omega |\nabla u_i|^2\,dx}{\int_{\Omega_l} |\nabla u_i|^2\,dx}
\int_{\Omega_l} t_{i,l}^2|\nabla u_i|^2\,dx\\
  &\leq  -\delta \|v\|^2 + (1+\delta)C_\eta \sum_i \sum_{l\in L_i} \frac{R^2}{r_l^2}
  \int_{\Omega_l} t_{i,l}^2|\nabla u_i|^2\,dx\\
  &\leq \left( -\delta + (1+\delta)C_\eta \frac{R^2}{\min_{l\in L_i}{r_l^2}}\right)\|v\|^2.
\end{split}
\]
In both cases assumption (v) holds true when $|D|$ and $C_\eta$ are sufficiently small.

(vi) the same as in the previous subsection, once one notices that
\[
\|\xi_{i,l}'(u)[v]\|^2 %= \int_{\Omega} |\nabla (\eta_l v_i)|^2\,dx
\leq
2\int_{\Omega} \left(v_i^2|\nabla \eta_l|^2+\eta_l^2|\nabla v_i|^2\right)\,dx \leq (C_\eta+1)
\|v\|^2. \qedhere
\]
\end{proof}
\begin{proof}[Proof of Theorem \ref{teo:intro_appl}]
Since $\beta_{ij}=\beta_{ji}$, system \eqref{eq:bec} is variational, with potential
\[
F(u)=\sum_{i=1}^k\left(\frac{\mu_i}{4} u_i^4 + \sum_{j\neq i}\frac{\beta_{ij}}{4}u_i^2u_j^2\right).
\]
It is easy to check that it
satisfies assumptions (F1), (F2), (F4) with $p=4<2^*$ and $\delta=2$. If $\beta_{ij}\leq 0$
for every $i,j$, then it also satisfies (F3), so that Theorem \ref{teo:application_single_bump}
immediately applies. Since (F3) is used only in the estimate in Lemma \ref{lem:lontano_da_zero_perturbato} (and in its counterpart in Lemma \ref{teo:application_ground_states}) we show how to replace that argument in case
$\beta_{ij}\leq \bar \beta$ for every $i,j$, with $\bar \beta$ positive and sufficiently small.
We have
\[
\int_{\Omega_l} \partial_i F(u)u_i\,dx = \int_{\Omega_l} \left(\mu_i u_i^4 + \sum_{j\neq i} \beta_{ij} u_i^2 u_j^2\right)\,dx \leq \mu_i
\int_{\Omega_l} u_i^4\,dx + \bar\beta C_S^4(B,4)R^4,
\]
where the last term is arbitrarily small when $\bar\beta$ is.
\end{proof}

\noindent\verb"benedetta.noris1@unimib.it"\\
Dipartimento di Matematica e Applicazioni, Universit\`a degli Studi
di Milano-Bicocca, via Bicocca degli Arcimboldi 8, 20126 Milano,
Italy

\noindent \verb"gianmaria.verzini@polimi.it"\\
Dipartimento di Matematica, Politecnico di Milano, p.za Leonardo da
Vinci 32,  20133 Milano, Italy

\end{document}